\newcommand{\cN}{\mathcal{N}}
\newcommand{\bR}{\mathbb{R}}
\newtheorem{theorem}{Theorem}
\newtheorem{definition}{Definition}
\newtheorem{proposition}{Proposition}
\newtheorem{assumption}{Assumption}
\newtheorem{remark}{Remark}
\begin{document}
    
    \title[Optimal transport natural gradient]{Optimal transport natural gradient for statistical manifolds with continuous sample space}
    \author[Chen]{Yifan Chen*}
    \address{Department of Computing + Mathematical Sciences \\
        California Institute of Technology \\
        Pasadena, California 91106 \\
        email: yifanc@caltech.edu \\
        Corresponding author}
    
    \author[Li]{Wuchen Li}
    \address{Department of Mathematics \\
        UCLA \\
        Los Angeles, CA 90095 USA \\
        email: wcli@math.ucla.edu}
    
    \keywords{Optimal Transport; Information Geometry; Wasserstein Statistical Manifold; Wasserstein Natural Gradient.}    
    
    \maketitle
    \begin{abstract}
        We study the Wasserstein natural gradient in parametric statistical models with continuous sample spaces. Our approach is to pull back the $L^2$-Wasserstein metric tensor in the probability density space to a parameter space, equipping the latter with a positive definite metric tensor, under which it becomes a Riemannian manifold, named the Wasserstein statistical manifold. In general, it is not a totally geodesic sub-manifold of the density space, and therefore its geodesics will differ from the Wasserstein geodesics, except for the well-known Gaussian distribution case, a fact which can also be validated under our framework. We use the sub-manifold geometry to derive a gradient flow and natural gradient descent method in the parameter space. When parametrized densities lie in $\bR$, the induced metric tensor establishes an explicit formula. In optimization problems, we observe that the natural gradient descent outperforms the standard gradient descent when the Wasserstein distance is the objective function. In such a case, we prove that the resulting algorithm behaves similarly to the Newton method in the asymptotic regime. The proof calculates the exact Hessian formula for the Wasserstein distance, which further motivates another preconditioner for the optimization process. To the end, we present examples to illustrate the effectiveness of the natural gradient in several parametric statistical models, including the Gaussian measure, Gaussian mixture, Gamma distribution, and Laplace distribution.
    \end{abstract}
    
    \section{Introduction}
    The statistical distance between probability measures plays an important role in many fields such as data analysis and machine learning, which usually consist in minimizing a loss function as
    \begin{equation*}
    \text{minimize}\quad  d(\rho,\rho_{e})\quad \text{s.t.}\quad \rho \in \mathcal{P}_{\theta}.
    \end{equation*}
    Here $\mathcal{P}_{\theta}$ is a parameterized subset of the probability density space, and $\rho_e$ is the target density, which is often an empirical realization of a ground-truth distribution. The function $d$ quantifies the difference between densities $\rho$ and $\rho_e$.
    
    An important example for $d$ is the Kullback-Leibler (KL) divergence, also known as the relative entropy, which closely relates to the maximum likelihood estimate in statistics and the field of information geometry \cite{IG}\cite{IG2}. The Hessian operator of KL embeds $\mathcal{P}_{\theta}$ as a statistical manifold, in which the Riemannian metric is the Fisher-Rao metric. Due to Chentsov \cite{cencov}, the Fisher-Rao metric is the only one, up to scaling, that is invariant under sufficient statistics. Using the Fisher-Rao metric, a natural gradient descent method, realized by a Forward-Euler discretization of the gradient flow in the manifold, has been introduced. It has found many successful applications in a variety of problems such as blind source separation \cite{Amari1998Adaptive}, machine learning \cite{NG}\cite{ Martens2014New}, filtering \cite{Ollivier NG Kalman}\cite{Ollivier NG extended}, statistics \cite{Malag NG exponential family}\cite{Malag NG fitness}, optimization \cite{Malag RS for NG}\cite{Ollivier Asym NG}\cite{Olliver IGO} and many others.
    
    Recently, the Wasserstein distance, introduced through the field of optimal transport (OT), has been attracting increasing attention in computation and learning \cite{PeyreCuturi2018_computational}. One promising property of the Wasserstein distance is its ability to reflect the metric on sample space, rendering it very useful in machine learning \cite{WGAN}\cite{LWL}\cite{Boltzman}, statistical models \cite{CarliNingGeorgiou2013_convexa}\cite{ChenGeorgiouTannenbaum2017_optimal} and geophysics \cite{ Engquist2014Application}\cite{Engquist2016Optimal}\cite{ Chen2017The}\cite{MeBrMeOuVi:16}\cite{MeBrMeOuVi:16b}. Further, OT theory provides the $L^2$-Wasserstein metric tensor, which gives the probability density space (with smooth, positive densities in a compact domain) a formal infinite-dimensional Riemannian differential structure \cite{Lafferty}\cite{Lott}. This structure can be made mathematically rigorous and general to work for probability measures with finite second moments by using tools in metric geometry \cite{Ambrosio2008Gradient}. Under such a setting, the gradient flow with respect to the $L^2$-Wasserstein metric tensor, known as the Wasserstein gradient flow, is well-defined and has been seen deep connections to fluid dynamics \cite{C2}\cite{otto2001}, differential geometry \cite{LV} and mean-field games \cite{Ligame}\cite{MFG}. 
    
    Nevertheless, compared to the Fisher-Rao metric, the Riemannian structure of the Wasserstein metric is mostly investigated in the whole probability space rather than the parameterized subset $\mathcal{P}_{\theta}$. Therefore, there remains a gap in developing the natural gradient concept in a parametric model within the Wasserstein geometry context. Here we are primarily interested in the question of whether there exists the Wasserstein metric tensor and the associated Wasserstein natural gradient in a general parameterized subset and whether we can gain computational benefits by considering these structures. We believe the answer to it will serve as a window to bring synergies between information geometry and optimal transport communities. {We note that our motivation is very similar to \cite{LM}, which introduces the natural gradient concept into parametric statistical models with discrete sample spaces, and discusses its connection and difference to the Fisher-Rao natural gradient in information geometry.}
    
    In this paper, we embed the Wasserstein geometry to parametric probability models with continuous sample spaces. {Our treatment relies on the same ``pull-back'' idea as in \cite{LM}. Precisely,} we pull back the $L^2$-Wasserstein metric tensor into the parameter space, making it become a finite-dimensional Riemannian manifold, given the resulted tensor is positive definite. In \cite{LM}, where the finite discrete sample space and positive probability simplex are considered, the pull-back relation implicitly defines the metric tensor in the parameter space, and its explicit form involves a weighted Laplacian on the graph. In our continuous world, we need solutions to an elliptic equation to define the associated metric tensor. We rely on Assumption \ref{assume: elliptic} to guarantee the invertibility of this operator, which makes sure the metric tensor is well-defined. This assumption holds for the compact sample space and positive smooth densities, and also holds for Gaussian distributions in the unbounded $\bR^d$, as we will see in section \ref{eg: Gaussian}. 
    
    We remark that, in general, the parametric model will not be a totally geodesic submanifold of the density manifold, except for the well-known Gaussian Wasserstein case \cite{WM}\cite{GW}\cite{BuresWasserstein}, see also section \ref{eg: Gaussian}. Thus generally, we will obtain a new geodesic distance in the parametric model that is different from the Wasserstein metric. However, this will not be a problem when it is used for specific computational purposes, as we will demonstrate both theoretically (in Theorem \ref{thm: hessian of R} and Proposition \ref{prop: hessian and G}) and numerically (in section \ref{section:examples}), the natural gradient concept derived from the submanifold geometry will still approximate the second-order information of the geodesic distance in the whole density manifold. In the meantime, we will benefit from the fact that the new geometry is on the finite-dimensional parameter space. This makes it easier to deal with mathematically compared to the infinite-dimensional density manifold.
    
    The Riemannian geometry in the parameter space allows us to derive the constrained Wasserstein gradient flow in it. The discretized version of the flow leads to the Wasserstein natural gradient descent method, in which the induced metric tensor acts as a preconditioning term in the standard gradient descent iteration. This is a standard approach for introducing the natural gradient concept and has been used in the Fisher-Rao natural gradient \cite{NG} and the Wasserstein natural gradient (discrete sample space) \cite{LM}. In our Wasserstein natural gradient (continuous sample space), when the dimension of densities is one, we obtain an explicit formula of the metric tensor. Precisely, given $\rho(x,\theta)$ as a parameterized density, $x\in \mathbb{R}^1$ and $\theta\in \Theta\subset \mathbb{R}^d$, the $L^2$-Wasserstein metric tensor on $\Theta$ will be
    \begin{align*}
    G_W(\theta) = \int \frac{1}{\rho(x,\theta)}(\nabla_{\theta} F(x,\theta))^T\nabla_{\theta} F(x,\theta)dx,
    \end{align*}    
    where $F(y,\theta)=\int_{-\infty}^x\rho(y,\theta)dy$ is the cumulative distribution function of $\rho(x,\theta)$. We apply the natural gradient descent induced by $G_W(\theta)$ to the Wasserstein metric modeled problems. It is seen that the Wasserstein gradient descent outperforms the Euclidean and Fisher-Rao natural gradient descent in the iterations. We give theoretical justifications of this phenomenon by showing that the Wasserstein gradient descent behaves asymptotically as the Newton method in such a case. A detailed description of the Hessian matrix is also presented by leveraging techniques in one-dimensional OT. Interestingly, this formula also provides us with a new preconditioner for the Wasserstein metric modeled problems and results in a new algorithm which we call the modified Wasserstein gradient descent. We will compare its performance with the Wasserstein gradient descent in the experiments.
    
    In the literature, there are pioneers toward the constrained Wasserstein gradient flow. \cite{C2} studies the density space with a fixed mean and variance. Compared to them, we focus on a density set parameterized by a finite-dimensional parameter space. Also, there have been many works linking information geometry and optimal transport \cite{LP}\cite{Wong}. In particular, the Wasserstein metric tensor for Gaussian distributions exhibits an explicit form {\cite{WM}\cite{GW}\cite{BuresWasserstein}}, which leads to extensive studies between the Wasserstein and Fisher-Rao metric for this model \cite{Marti2016Optimal}\cite{IGW}\cite{Sanctis2017A}. In contrast to their works, our Wasserstein metric tensor can work for general parametric models. For consistency, in section \ref{eg: Gaussian}, we will show that our defined Wasserstein statistical manifold gives the same metric tensor as in the literature when the parametric model is Gaussian. Thus, it can be seen as a direct extension to the Gaussian case. Under such an extension, we are able to discuss the natural gradient for a lot of parametric models in a systematic way.
    
    This paper is organized as follows. In section \ref{section:review of OT}, we briefly review the theory of optimal transport, with a concentration on its Riemannian differential structure. In section \ref{section:wasserstein ng}, we introduce the Wasserstein statistical manifolds by defining the metric tensor in the parameter space directly through the pull-back relation. The Wasserstein gradient flow and natural gradient descent method are then derived. We give a concise study of the metric tensor for one-dimensional densities, showing its connection to the Fisher information matrix. In this case, we theoretically analyze the effect of this natural gradient in the Wasserstein metric modeled problems. In section \ref{section:examples}, examples are presented to justify the previous discussions. {We also provide a detailed comparison between the performance of the Fisher-Rao and Wasserstein natural gradient in different inference tasks in section \ref{sec: comparison FR and W}, with regard to different choices of loss functions, metric tensors and whether or not the ground truth density lies in the parametric family. Finally, we conclude the paper with several discussions in section \ref{sec: discussion}.}      
    \section{Review of Optimal Transport Theory}\label{review}
    \label{section:review of OT}
    In this section, we briefly review the theory of OT. We note that there are several equivalent definitions of OT, ranging from static to dynamic formulations. In this paper, we focus on the dynamic formulation and its induced Riemannian metric tensor in the density space.     
    
    The optimal transport problem is firstly proposed by Monge in 1781: given two probability densities $\rho^0, \rho^1$ on $\Omega \subset \bR^n$ (in general situation, probability measures are considered \cite{Ambrosio2008Gradient}\cite{vil2008}; here for simplicity, our focus is on probability densities), the goal is to find a transport plan $T: \Omega \to \Omega$ pushing $\rho^0$ to $\rho^1$ that minimizes the whole transportation cost, i.e.
    \begin{equation}
    \label{eqn:Monge}
    \inf_{T} \int_{\Omega} d\left(x,T(x)\right) \rho^0(x) dx\quad \text{s.t.} \quad \int_{A} \rho^1(x)dx=\int_{T^{-1}(A)} \rho^0(x)dx,
    \end{equation}
    for any Borel subset $A \subset \Omega$. Here the function $d\colon \Omega\times \Omega\rightarrow \mathbb{R}$ is the ground cost that measures the efforts to pay for transporting $x$ to $T(x)$. In the whole discussions we set $d(x,y)=\|x-y\|^2$ as the square of Euclidean distance. We assume all the densities belong to $\mathcal{P}_2(\Omega)$, the collection of probability density functions on $\Omega \subset \mathbb{R}^n$ with finite second moments. For simplicity, we also assume the densities under consideration are smooth in this paper, such that we are not confronted with delicate differentiability issues. 
    
    In 1942, Kantorovich relaxed the problem to a linear programming:
    \begin{equation}
    \label{eq:Kantorovich}
    \min_{\pi\in \Pi(\rho^0, \rho^1)}\int_{\Omega\times\Omega}\|x-y\|^2\pi(x,y)dxdy,
    \end{equation}
    where the infimum is taken {over} {the set} $\Pi$ {of joint probability measures} {on} $\Omega\times\Omega$ that have marginals $\rho^0$, $\rho^1$. This formulation finds a wide array of applications in computations \cite{PeyreCuturi2018_computational}.
    
    In recent years, OT connects to a variational problem in density space, known as the Benamou-Brenier formula \cite{BB}:
    \begin{subequations}\label{BB1}
        \begin{equation}\label{BB}
        \inf_{\Phi_t}~\int_0^1\int_\Omega \|\nabla\Phi(t,x)\|^2\rho(t,x) dx dt, 
        \end{equation}
        where the infimum is taken over the set of Borel potential functions {$\Phi: [0,1]\times \Omega\to \mathbb{R}$.} 
        Each gradient vector field of potential $\Phi_t=\Phi(t,x)$ on sample space determines a corresponding density path $\rho_t=\rho(t,x)$ as the solution of the continuity equation:  
        \begin{equation}\label{BB2}
        \frac{\partial \rho(t,x)}{\partial t}+\nabla\cdot (\rho(t,x)\nabla\Phi(t,x))=0,\quad \rho(0,x)=\rho^0(x),\quad \rho(1,x)=\rho^1(x).
        \end{equation}
    \end{subequations}
    Here $\nabla\cdot$ and $\nabla$ are the divergence and gradient operators in $\mathbb{R}^n$. If $\Omega$ is a compact set, the zero flux condition (Neumann condition) is proposed on the boundary of $\Omega$. This is to ensure that $\int_\Omega\frac{\partial\rho(t,x)}{\partial t}dx=0$, so that the total mass is conserved. 
    
    Under mild regularity assumptions, the above three formulations \eqref{eqn:Monge} \eqref{eq:Kantorovich} \eqref{BB1}  
    are equivalent \cite{vil2008}. A convenient example of the assumption is that the probability measures are absolutely continuous with respect to the Lebesgue measure \cite{vil2008}. Since in our set-up, $\rho^0, \rho^1$ are probability densities, the assumption holds and all the three optimization problems share the same optimal value.  The value is denoted by $(W_2(\rho^0,\rho^1))^2$, which is called the square of the $L^2$-Wasserstein distance between $\rho^0$ and $\rho^1$.
    Here the subscript ``2'' in $W_2$ indicates that the $L^2$ ground distance is used. We note that formulation  \eqref{eqn:Monge} \eqref{eq:Kantorovich} is static, in the sense that only the initial and final states of the transportation are considered. By taking the transportation path into consideration, OT enjoys a dynamical formulation \eqref{BB1}. This will be our primary interest in the following discussion.
    
    The variational formulation~\eqref{BB1} introduces a formal infinite dimensional Riemannian manifold in the density space. 
    % To make the formal structure rigorous a lot of analytical efforts are required. 
    We note that the infinite-dimensional Riemannian geometry is well-defined for smooth positive densities in a compact domain \cite{Lott}, and can be extended to more general situations where the measure is only with finite second moments through tools in metric geometry \cite{Ambrosio2008Gradient}. Here, for better illustration, assume $\Omega$ is compact and consider {the set of smooth and strictly positive densities}
    \begin{equation*}
    \mathcal{P}_+(\Omega)=\Big\{\rho \in C^{\infty}(\Omega)\colon \rho(x)>0,~\int_\Omega\rho(x)dx=1\Big\}\subset\mathcal{P}_2(\Omega).  \end{equation*}
    Denote by $\mathcal{F}(\Omega):=C^{\infty}(\Omega)$ the set of smooth real valued functions on $\Omega$. 
    The tangent space of $\mathcal{P}_+(\Omega)$ is given by 
    $$
    T_\rho\mathcal{P}_+(\Omega) = \Big\{\sigma\in \mathcal{F}(\Omega)\colon \int_\Omega\sigma(x) dx=0 \Big\}.
    $$
    We also denote $\tilde{\mathcal{F}}(\Omega):=\{\sigma \in \mathcal{F}(\Omega)\colon \frac{\partial \sigma}{\partial n}|_{\partial \Omega}=0\}$.
    Given $\Phi\in \tilde{\mathcal{F}}(\Omega) / \bR$ and $\rho\in \mathcal{P}_+(\Omega)$, define
    \begin{equation*}
    V_{\Phi}(x):=-\nabla\cdot (\rho(x) \nabla \Phi(x))\in T_{\rho}\mathcal{P}_+(\Omega).
    \end{equation*}
    % Here the zero flux condition ensures $\int_{\Omega}V_{\Phi}(x)dx=0$.
    % Thus $V_\Phi$.
    Since $\rho$ is positive in a compact region $\Omega$, the elliptic operator identifies the function $\Phi \in \tilde{\mathcal{F}}(\Omega)/\bR$ with the tangent vector $V_{\Phi}$ in $\mathcal{P}_+(\Omega)$. This gives an isomorphism
    $$\tilde{\mathcal{F}}(\Omega)/\bR\rightarrow T_{\rho}\mathcal{P}_+(\Omega), \quad \Phi \mapsto V_\Phi.$$
    
    So we can treat  $T^*_{\rho}\mathcal{P}_+(\Omega)\cong\tilde{\mathcal{F}}(\Omega)/\bR$ as the smooth cotangent space of $\mathcal{P}_+(\Omega)$. The above facts imply that the $L^2$-Wasserstein metric tensor on the density space \cite{Lott}\cite{LiG} can be obtained as follows:
    \begin{definition}[$L^2$-Wasserstein metric tensor]\label{eqn:riemannian whole density space}
        Define the inner product on the tangent space of positive densities $g_\rho\colon {T_\rho}\mathcal{P}_+(\Omega)\times {T_\rho}\mathcal{P}_+(\Omega)\rightarrow \mathbb{R}$ by
        \begin{equation*}
        g_\rho(\sigma_1, \sigma_2)=\int_\Omega \nabla\Phi_1(x)\cdot\nabla\Phi_2(x)\rho(x) dx,
        %\label{eqn:riemannian whole density space}
        \end{equation*}
        where $\sigma_1=V_{\Phi_1}$, $\sigma_2=V_{\Phi_2}$ with $\Phi_1(x)$, $\Phi_2(x)\in \tilde{\mathcal{F}}(\Omega)/\mathbb{R}$. 
    \end{definition}
    With the inner product specified above,  the variational problem~\eqref{BB1} becomes a geometric action energy in $(\mathcal{P}_+(\Omega), g_\rho)$. As in Riemannian geometry, the square of distance equals the energy of geodesics, i.e.
    \begin{equation*}
    (W_2(\rho^0, \rho^1))^2=\inf_{\Phi_t}~\Big\{\int_0^1 g_{\rho_t}(V_{\Phi_t}, V_{\Phi_t}) dt\colon \partial_t\rho_t=V_{\Phi_t},~ \rho(0,x)=\rho^0,~\rho(1,x)=\rho^1\Big\}.
    \end{equation*}
    This is exactly the form in \eqref{BB1}. In this sense, it explains that the dynamical formulation of OT exhibits the Riemannian manifold structure in the density space. In \cite{Lafferty}, $(\mathcal{P}_+(\Omega), g_\rho)$ is named density manifold. More geometric studies are provided in \cite{Lott}\cite{LiG}.
    
    \section{Wasserstein Natural Gradient}
    \label{section:wasserstein ng}
    In this section, we study parametric statistical models, which are parameterized subsets of the probability space $\mathcal{P}_2(\Omega)$. We pull back the $L^2$-Wasserstein metric tensor into the parameter space, turning it to be a Riemannian manifold. This consideration allows us to introduce the Riemannian (natural) gradient flow on parameter spaces, which further leads to a natural gradient descent method in the field of optimization. When densities lie in $\bR$, we show that the metric tensor establishes an explicit formula. It acts as a positive definite and asymptotically-Hessian preconditioner for the Wasserstein metric related minimizations. 
    
    \subsection{Wasserstein statistical manifold} We adopt the definition of a statistical model from \cite{IG2} (Chapter 3.2). It is represented by a triple $(\Omega,\Theta,\rho)$, where $\Omega\subset\mathbb{R}^n$ is the continuous sample space, $\Theta \subset \bR^d$ is the statistical parameter space, and $\rho$ is the probability density on $\Omega$ parameterized by $\theta$ such that $\rho\colon \Theta\rightarrow \mathcal{P}_2(\Omega)$ and $\rho=\rho(\cdot,\theta)$. In Chapter 3.2 of the book \cite{IG2} some differentiability conditions are posed on the map $\rho$ between Banach manifolds $\Theta$ and $\mathcal{P}_2(\Omega)$. Here for simplicity we assume $\Omega$ is either compact or $\Omega=\bR^n$, and each $\rho(x,\theta)$ is positive and smooth. The parameter space is a finite dimensional manifold with metric tensor denoted by $\langle\cdot,\cdot\rangle_{\theta}$. {The Riemannian gradient of a function $\rho(\theta)$ on $\Theta$ is denoted by $\nabla_{\theta}\rho(\theta)$.} We also use $\langle\cdot,\cdot \rangle$ to represent the Euclidean inner product in $\mathbb{R}^d$. 
    
    The Riemannian metric $g_{\theta}$ on $\Theta$ will be the {formal} pull-back of $g_{\rho(\cdot,\theta)}$ on $\mathcal{P}_2(\Omega)$. That is, for $\xi$, $\eta\in T_\theta\Theta$, we have
    \begin{equation}
    \label{eqn:pull-back}
    g_\theta(\xi,\eta):=g_{\rho(\cdot,\theta)}(d_\xi \rho(\theta), d_\eta\rho(\theta)),
    \end{equation}
    where {by definition of the Riemannian gradient},  $d_\xi \rho(\theta)=\langle\nabla_\theta\rho(\cdot,\theta),\xi\rangle_{\theta}$, $d_\eta\rho(\theta)=\langle\nabla_\theta\rho(\cdot,\theta),\eta\rangle_{\theta}$, in which we use the notation that $d_{\xi}\rho(\theta)$ is the derivative of $\theta \to \rho(\theta)$ in the direction of $\xi$ computed at $\theta$. The relation \eqref{eqn:pull-back} implicitly defines the tensor $g_{\rho(\cdot,\theta)}$. It involves the solution of elliptic equations and in order to make the formula explicit we make the following assumptions on the statistical model $(\Omega,\Theta,\rho)$:
    \begin{assumption}
        For the statistical model $(\Omega,\Theta,\rho)$, one of the following two conditions are satisfied:
        \begin{enumerate}
            \item The sample space $\Omega$ is compact, and for each $\xi \in T_{\theta}(\Theta)$, the elliptic equation
            \begin{equation*}
            \begin{cases}
            -\nabla \cdot (\rho(x,\theta) \nabla \Phi(x))=\langle \nabla_{\theta} \rho(x,\theta), \xi \rangle_{\theta}\\
            \frac{\partial \Phi}{\partial n}|_{\partial \Omega}=0
            \end{cases}
            \end{equation*}
            has a smooth solution $\Phi$ satisfying 
            \begin{equation}
            \int_\Omega \rho(x,\theta) \|\nabla \Phi(x)\|^2 dx < +\infty.
            \label{eqn: regularity of phi}
            \end{equation}    
            \item The sample space $\Omega=\bR^n$, and for each $\xi \in T_{\theta}(\Theta)$, the elliptic equation
            \begin{equation*}
            -\nabla \cdot (\rho(x,\theta) \nabla \Phi(x))=\langle \nabla_{\theta} \rho(x,\theta), \xi \rangle_{\theta}
            \end{equation*}
            has a smooth solution $\Phi$ satisfying 
            \begin{equation}
            \int_\Omega \rho(x,\theta) \|\nabla \Phi(x)\|^2 dx < +\infty \quad \text{and} \quad \int_\Omega \rho(x,\theta) |\Phi(x)|^2 dx < +\infty.
            \label{eqn: regularity of phi, whole space}
            \end{equation}    
        \end{enumerate}
        \label{assume: elliptic}
    \end{assumption}
    {Note that $\Phi$ will be dependent on $\xi$, the tangent vector at play. For simplicity of notation we do not write this dependence explicitly, but this will be assumed wherever it appears throughout this paper.}
    Assumption \ref{assume: elliptic} guarantees the action of ``pull-back'' we describe above is well-defined. {The condition \eqref{eqn: regularity of phi} is for the boundness of the energy of the solution.} The {additional assumption in} condition \eqref{eqn: regularity of phi, whole space} {is} used to {guarantee the vanishing of boundary terms when integrating by parts in the non-bounded domain, i.e., one has $-\int_{\bR^n} \Phi_1\nabla \cdot \rho \nabla \Phi_2=\int_{\bR^n} \rho \nabla \Phi_1 \cdot \nabla \Phi_2$ for any $\Phi_1,\Phi_2$ satisfying condition \eqref{eqn: regularity of phi, whole space}.  This will be used in the proof of the uniqueness of the solution to the equation. We will use the uniqueness result  in section \ref{eg: Gaussian} for Gaussian measures, to show the submanifold geometry is totally geodesic.} 
    \begin{proposition}
        Under assumption \ref{assume: elliptic},  the solution $\Phi$ is unique modulo the addition of a spatially-constant function.
    \end{proposition}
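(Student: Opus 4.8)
The plan is to exploit linearity and an energy identity. Suppose $\Phi_1$ and $\Phi_2$ both solve the elliptic equation for the same tangent vector $\xi$ and satisfy the regularity stated in Assumption~\ref{assume: elliptic}. Set $\Psi := \Phi_1 - \Phi_2$. Subtracting the two equations, $\Psi$ solves the homogeneous problem $-\nabla\cdot(\rho(x,\theta)\nabla\Psi(x)) = 0$, together with the Neumann condition $\frac{\partial\Psi}{\partial n}|_{\partial\Omega}=0$ in case (1); and since the relevant function spaces are linear, $\Psi$ inherits the finiteness bounds \eqref{eqn: regularity of phi} (resp.\ \eqref{eqn: regularity of phi, whole space}). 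The goal is then to show $\Psi$ is a spatial constant, which is precisely uniqueness modulo an additive constant.

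The main step is to test the homogeneous equation against $\Psi$ itself and integrate by parts. In case (1), $\Omega$ is compact and the Neumann condition kills the boundary term $\int_{\partial\Omega}\rho\,\Psi\,\frac{\partial\Psi}{\partial n}\,dS$, so
\begin{equation*}
0 = -\int_\Omega \Psi\,\nabla\cdot(\rho\nabla\Psi)\,dx = \int_\Omega \rho(x,\theta)\,\|\nabla\Psi(x)\|^2\,dx .
\end{equation*}
Since $\rho(\cdot,\theta)>0$ on $\Omega$, this forces $\nabla\Psi\equiv 0$, hence $\Psi$ is constant on each connected component of $\Omega$; under the connectedness of $\Omega$ (which holds for the models considered), $\Psi$ is a single constant. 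In case (2), the same identity is exactly the one the excerpt isolates as a consequence of \eqref{eqn: regularity of phi, whole space}: applying $-\int_{\bR^n}\Phi_1\nabla\cdot(\rho\nabla\Phi_2) = \int_{\bR^n}\rho\nabla\Phi_1\cdot\nabla\Phi_2$ with $\Phi_1 = \Phi_2 = \Psi$ again yields $\int_{\bR^n}\rho\|\nabla\Psi\|^2 = 0$ and therefore $\Psi$ is constant.

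The \emph{main obstacle} is the rigorous justification of the integration by parts in case (2), where there is no boundary to carry the Neumann condition and one must instead control the flux through large spheres. I would handle this by an exhaustion argument: pick a smooth cutoff $\chi_R$ supported in $B_{2R}$, equal to $1$ on $B_R$, with $\|\nabla\chi_R\|_\infty\lesssim 1/R$, test the homogeneous equation against $\chi_R\Psi$, and integrate by parts over $B_{2R}$, where the computation is legitimate. This gives $\int \rho\,\chi_R\,\|\nabla\Psi\|^2 = -\int \rho\,\Psi\,\nabla\chi_R\cdot\nabla\Psi$, and the right-hand side is bounded via Cauchy--Schwarz by $\frac{C}{R}\big(\int_{B_{2R}\setminus B_R}\rho|\Psi|^2\big)^{1/2}\big(\int_{B_{2R}\setminus B_R}\rho\|\nabla\Psi\|^2\big)^{1/2}$; both tail factors are finite and vanish as $R\to\infty$ by \eqref{eqn: regularity of phi, whole space}, so the cross term tends to $0$ while the left-hand side tends to $\int_{\bR^n}\rho\|\nabla\Psi\|^2$ by monotone convergence, yielding the energy identity. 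This is precisely where the extra $L^2(\rho)$ bound on $\Phi$ is needed; without it the cutoff error cannot be shown to disappear. Everything else — linearity, positivity of $\rho$, connectedness — is routine.
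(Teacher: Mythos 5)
Your proof is correct, and the overall strategy is the same as the paper's: reduce to the homogeneous equation $\nabla\cdot(\rho\nabla\Psi)=0$ by linearity, derive the energy identity $\int\rho\|\nabla\Psi\|^2=0$, and conclude from the positivity of $\rho$. Case (1) is handled identically. In case (2) your technical device differs: the paper integrates over balls $B_R(0)$, bounds the resulting flux term $\int_{\partial B_R}\rho\,\Psi\,(\nabla\Psi\cdot n)$ by Cauchy--Schwarz on the sphere, and then extracts a sequence of radii $R_k\to\infty$ along which the two surface integrals vanish (possible because the corresponding volume integrals are finite by \eqref{eqn: regularity of phi, whole space}). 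You instead test against $\chi_R\Psi$ for a cutoff with $\|\nabla\chi_R\|_\infty\lesssim 1/R$ and show the error term is controlled by the tails of the same two finite integrals. Both arguments use assumption \eqref{eqn: regularity of phi, whole space} in exactly the same role; yours has the small advantage of avoiding the subsequence extraction (the tails of a convergent integral vanish for \emph{all} large $R$, whereas the surface integrals only vanish along a subsequence), while the paper's avoids introducing a cutoff. Your explicit remark about connectedness of $\Omega$ (needed to pass from $\nabla\Psi\equiv 0$ to a single constant) is a point the paper leaves implicit.
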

    
    \begin{proof}
        It suffices to show the equation
        \begin{equation}
        \nabla \cdot (\rho(x,\theta) \nabla \Phi(x))=0
        \label{homogeneous elliptic}
        \end{equation}
        only has the trivial solution $\nabla\Phi=0$ in the space described in assumption \ref{assume: elliptic}.
        
        For case (1), we multiple $\Phi$ to \eqref{homogeneous elliptic} and integrate it in $\Omega$. Integration by parts result in
        \[\int_\Omega \rho(x,\theta)\|\nabla \Phi(x)\|^2=0 \]
        due to the zero flux condition. Hence $\nabla \Phi=0$.
        
        For case (2), we denote by $B_R(0)$ the ball in $\bR^n$ with center $0$ and radius $R$. Multiply $\Phi$ to the equation and integrate in $B_R(0)$:
        \[\int_{B_R(0)} \rho(x,\theta)\|\nabla \Phi(x)\|^2=\int_{\partial B_R(0)}\rho(x,\theta)\Phi(x)(\nabla \Phi(x) \cdot n)dx .\]
        By Cauchy-Schwarz inequality we can control the right hand side by
        \begin{equation*}
        |\int_{\partial B_R(0)}\rho(x,\theta)\Phi(x)(\nabla \Phi(x) \cdot n) dx |^2 \leq \int_{\partial B_R(0)}\rho(x,\theta)\Phi(x)^2 dx \cdot \int_{\partial B_R(0)}\rho(x,\theta)\|\nabla \Phi(x)\|^2 dx.
        \end{equation*}
        However, due to \eqref{eqn: regularity of phi, whole space}, there exists a sequence $R_k, k\geq 1$, such that $R_{k+1}>R_k$, $\lim_{k\rightarrow +\infty} R_k =\infty$ and 
        \[\lim_{k\rightarrow+\infty} \int_{\partial B_{R_k}(0)}\rho(x,\theta)\Phi(x)^2 dx = \lim_{k\rightarrow+\infty} \int_{\partial B_{R_k}(0)}\rho(x,\theta)\|\nabla \Phi(x)\|^2 dx = 0.\]
        Hence 
        \[\lim_{k\rightarrow+\infty} |\int_{\partial B_{R_k}(0)}\rho(x,\theta)\Phi(x)(\nabla \Phi(x) \cdot n) dx |^2 = 0,\]
        which leads to 
        \[\int_{\bR^n} \rho(x,\theta)\|\nabla \Phi(x)\|^2=0. \]
        Thus $\nabla \Phi=0$, which is the trivial solution.
    \end{proof}
    
    Since we deal with positive $\rho$, the existence of solutions to the case (1) in Assumption \ref{assume: elliptic} is ensured by the theory of elliptic equations. For case (2), i.e., $\Omega=\bR^n$, we show when $\rho$ is Gaussian distribution in $\bR^d$, the existence of solution $\Phi$ is guaranteed and exhibits explicit formulation in section \ref{eg: Gaussian}. Although we only deal with compact $\Omega$ or the whole $\bR^n$, the treatment to some other $\Omega$, such as the half-space of $\bR^n$, is similar and omitted. {Given these preparations, now we can explicitly write down the Wasserstein metric tensor in the parameter space. See the following Definition \ref{def:metric tensor} and Proposition \ref{prop: metric tensor on theta}.}
    
    \begin{definition}[$L^2$-Wasserstein metric tensor in parameter space]
        \label{def:metric tensor} Under assumption \ref{assume: elliptic}, the inner product $g_{\theta}$ on $T_{\theta}(\Theta)$ is defined as
        \begin{equation*}
        g_{\theta}(\xi, \eta)=\int_\Omega \rho(x, \theta) \nabla \Phi_\xi(x) \cdot \nabla \Phi_\eta(x) dx,
        \end{equation*}
        where $\xi,\eta$ are tangent vectors in $T_{\theta}(\Theta)$, $\Phi_\xi$ and $\Phi_\eta$ satisfy $\langle \nabla_{\theta} \rho(x,\theta), \xi \rangle_{\theta} = -\nabla \cdot (\rho \nabla \Phi_{\xi}(x))$ and $\langle \nabla_{\theta} \rho(x,\theta), \eta \rangle_{\theta} =-\nabla \cdot (\rho \nabla \Phi_{\eta}(x))$.
    \end{definition}
    
    Generally, $(\Theta, g_{\theta})$ will be a Pseudo-Riemannian manifold. However, if the statistical model is non-degenerate, i.e., $g_{\theta}$ is positive definite on the tangent space $T_{\theta}(\Theta)$, then $(\Theta, g_{\theta})$ forms a Riemannian manifold.  We call $(\Theta, g_{\theta})$ the Wasserstein statistical manifold. 
    \begin{proposition}
        \label{prop: metric tensor on theta}
        The metric tensor can be written as
        \begin{equation}
        g_{\theta}(\xi, \eta)=\xi^TG_W(\theta)\eta,
        \label{tensor:Gw}
        \end{equation}    
        where $G_W(\theta)\in\mathbb{R}^{d\times d}$ is a positive definite matrix and can be represented by
        \begin{equation*}
        G_W(\theta)= G_{\theta}^T A(\theta)  G_{\theta},
        \end{equation*}
        in which $A_{ij}(\theta)=\int_{\Omega} \partial_{\theta_i} \rho(x,\theta)(-\Delta_{\theta})^{-1} \partial_{\theta_j} \rho(x,\theta) dx$ and $-\Delta_{\theta}=-\nabla\cdot(\rho(x,\theta) \nabla)$. The matrix $G_{\theta}$ associates with the original metric tensor in $\Theta$ such that $\langle \dot{\theta}_1, \dot{\theta}_2\rangle_{\theta}=\dot{\theta}_1^TG_{\theta}\dot{\theta}_2$ for any $\dot{\theta}_1,\dot{\theta}_2 \in T_\theta(\Theta)$. If $\Theta$ is Euclidean space then $G_W(\theta)=  A(\theta)$.
    \end{proposition}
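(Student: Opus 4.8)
The idea is to reduce the metric tensor to a finite linear combination of coordinate-wise elliptic solves and then integrate by parts. Expand the right-hand side of the equation defining $\Phi_\xi$ in coordinates: $\langle\nabla_\theta\rho(x,\theta),\xi\rangle_\theta=\sum_{i=1}^d\partial_{\theta_i}\rho(x,\theta)\,(G_\theta\xi)_i$ (here $\nabla_\theta\rho$ is the vector of partial derivatives $\partial_{\theta_i}\rho$ and the subscript $\theta$ the metric on $\Theta$), so it suffices to solve, for each $j=1,\dots,d$, the equation $-\nabla\cdot(\rho(x,\theta)\nabla\Psi_j(x))=\partial_{\theta_j}\rho(x,\theta)$ (with the Neumann condition in case (1)). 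Differentiating $\int_\Omega\rho(x,\theta)\,dx=1$ in $\theta_j$ shows $\int_\Omega\partial_{\theta_j}\rho(x,\theta)\,dx=0$, so $\partial_{\theta_j}\rho(\cdot,\theta)$ is an admissible right-hand side; applying Assumption \ref{assume: elliptic} to the tangent vector $\xi=G_\theta^{-1}e_j$ (for which that right-hand side is exactly $\partial_{\theta_j}\rho$), together with the preceding Proposition, yields a solution $\Psi_j:=(-\Delta_\theta)^{-1}\partial_{\theta_j}\rho$, unique up to an additive constant, with $\int_\Omega\rho\,\|\nabla\Psi_j\|^2\,dx<\infty$ (and $\int_\Omega\rho\,|\Psi_j|^2\,dx<\infty$ in case (2)).

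By linearity of the elliptic operator, $\Phi_\xi=\sum_i(G_\theta\xi)_i\Psi_i$ and $\Phi_\eta=\sum_j(G_\theta\eta)_j\Psi_j$ up to additive constants, which do not affect the gradients appearing in Definition \ref{def:metric tensor}. Substituting,
\begin{equation*}
g_\theta(\xi,\eta)=\int_\Omega\rho\,\nabla\Phi_\xi\cdot\nabla\Phi_\eta\,dx=\sum_{i,j}(G_\theta\xi)_i(G_\theta\eta)_j\int_\Omega\rho\,\nabla\Psi_i\cdot\nabla\Psi_j\,dx .
\end{equation*}
Integrating by parts --- using the zero-flux condition in case (1), and in case (2) the identity $-\int_{\bR^n}\Phi_1\,\nabla\cdot(\rho\nabla\Phi_2)=\int_{\bR^n}\rho\,\nabla\Phi_1\cdot\nabla\Phi_2$ guaranteed by condition \eqref{eqn: regularity of phi, whole space}, whose boundary terms at infinity are handled exactly as in the proof of the preceding Proposition --- gives $\int_\Omega\rho\,\nabla\Psi_i\cdot\nabla\Psi_j\,dx=\int_\Omega\Psi_i\,\partial_{\theta_j}\rho\,dx=\int_\Omega\partial_{\theta_i}\rho\,(-\Delta_\theta)^{-1}\partial_{\theta_j}\rho\,dx=A_{ij}(\theta)$; the second equality uses $\Psi_i=(-\Delta_\theta)^{-1}\partial_{\theta_i}\rho$ and the self-adjointness of $(-\Delta_\theta)^{-1}$ on $L^2(dx)$, and these numbers are finite by Cauchy--Schwarz and the energy bounds above (which also exhibits $A(\theta)$ as symmetric). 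Therefore $g_\theta(\xi,\eta)=(G_\theta\xi)^T A(\theta)\,(G_\theta\eta)=\xi^T\big(G_\theta^T A(\theta) G_\theta\big)\eta$, i.e.\ \eqref{tensor:Gw} with $G_W(\theta)=G_\theta^T A(\theta) G_\theta$; when $\Theta$ is Euclidean, $G_\theta=I$ and $G_W(\theta)=A(\theta)$.

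For definiteness, the same computation gives $\xi^T G_W(\theta)\xi=\int_\Omega\rho\,\|\nabla\Phi_\xi\|^2\,dx\ge 0$, so $G_W(\theta)$ is positive semidefinite; if it vanishes then $\nabla\Phi_\xi\equiv 0$ (as $\rho>0$), hence $\sum_i\partial_{\theta_i}\rho(\cdot,\theta)(G_\theta\xi)_i\equiv 0$, and the non-degeneracy of the statistical model (linear independence of $\{\partial_{\theta_i}\rho(\cdot,\theta)\}_{i=1}^d$) together with the invertibility of $G_\theta$ forces $\xi=0$, so $G_W(\theta)$ is positive definite.

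The only genuinely delicate point is the integration by parts on the unbounded sample space and, relatedly, that $(-\Delta_\theta)^{-1}\partial_{\theta_j}\rho$ is well-defined with finite Dirichlet energy; both are precisely what condition \eqref{eqn: regularity of phi, whole space} in Assumption \ref{assume: elliptic} and the preceding uniqueness Proposition are designed to supply. Once these are invoked, what remains is the linear-algebra bookkeeping of the $G_\theta$ factors and the symmetry of $A(\theta)$.
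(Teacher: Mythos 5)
Your argument is correct and follows essentially the same route as the paper: integrate by parts and write $\Phi_\eta=(-\Delta_\theta)^{-1}\langle\nabla_\theta\rho,\eta\rangle_\theta$, then read off $G_W(\theta)=G_\theta^T A(\theta) G_\theta$ by bilinearity; the paper does this in three lines without the coordinate decomposition into the $\Psi_j$. Your additional observations --- that positive definiteness requires the non-degeneracy of the model (linear independence of the $\partial_{\theta_i}\rho$), which the paper treats as a standing hypothesis rather than a conclusion, and the careful justification of the boundary terms via Assumption \ref{assume: elliptic} --- are sound refinements of the same proof rather than a different one.
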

    \begin{proof}
        Write down the metric tensor
        \begin{align*}
        g_{\theta}(\xi, \eta)&=\int_\Omega \rho(x, \theta) \nabla \Phi_\xi(x) \cdot \nabla \Phi_\eta(x) dx\\
        & \overset{a)}{=} \int_{\Omega} \langle \nabla_{\theta} \rho(x,\theta), \xi \rangle_{\theta}\cdot \Phi_\eta(x) dx\\
        & = \int_{\Omega} \langle \nabla_{\theta} \rho(x,\theta), \xi \rangle_{\theta} (-\Delta_{\theta})^{-1} \langle \nabla_{\theta} \rho(x,\theta), \eta \rangle_{\theta} dx
        \end{align*}
        where $a)$ is due to integration by parts. Comparing the above equation with \eqref{tensor:Gw} finishes the proof.
    \end{proof}
    Given this $G_W(\theta)$, we derive the geodesic in this manifold and illustrate its connection to the geodesic in $\mathcal{P}_2(\Omega)$ as follows.
    \begin{proposition}
        The geodesics in $(\Theta, g_\theta)$ satisfies 
        \begin{equation}\label{Ham}
        \begin{cases}
        \dot \theta-G_W(\theta)^{-1}S=0\\    
        \dot S+\frac{1}{2}S^T \frac{\partial }{\partial \theta}G_W(\theta)^{-1}S=0\ 
        \end{cases}
        \end{equation}
    \end{proposition}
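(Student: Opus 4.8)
The plan is to derive the geodesic equations on $(\Theta, g_\theta)$ as the Euler–Lagrange / Hamiltonian equations of the geodesic action functional, using the explicit matrix form $g_\theta(\dot\theta,\dot\theta)=\dot\theta^T G_W(\theta)\dot\theta$ from Proposition \ref{prop: metric tensor on theta}. Since $(\Theta, g_\theta)$ is a finite-dimensional Riemannian manifold, its geodesics minimize (critical points of) the energy
\begin{equation*}
E(\theta(\cdot))=\frac{1}{2}\int_0^1 \dot\theta(t)^T G_W(\theta(t))\,\dot\theta(t)\,dt,
\end{equation*}
so the natural route is to write the Lagrangian $L(\theta,\dot\theta)=\tfrac12\dot\theta^T G_W(\theta)\dot\theta$, pass to the Hamiltonian via the Legendre transform $S=\partial L/\partial\dot\theta = G_W(\theta)\dot\theta$, and read off Hamilton's equations.

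First I would compute the conjugate momentum: $S = G_W(\theta)\dot\theta$, which immediately gives the first equation $\dot\theta - G_W(\theta)^{-1}S = 0$ (using that $G_W$ is positive definite, hence invertible, by Proposition \ref{prop: metric tensor on theta}). Next I would form the Hamiltonian by Legendre transform,
\begin{equation*}
H(\theta,S)=\langle S,\dot\theta\rangle - L(\theta,\dot\theta)=\frac{1}{2}S^T G_W(\theta)^{-1} S,
\end{equation*}
and then write Hamilton's equations $\dot\theta = \partial H/\partial S$, $\dot S = -\partial H/\partial\theta$. The first recovers $\dot\theta=G_W(\theta)^{-1}S$ again; the second gives
\begin{equation*}
\dot S = -\frac{\partial H}{\partial\theta} = -\frac{1}{2}S^T\frac{\partial}{\partial\theta}\bigl(G_W(\theta)^{-1}\bigr)S,
\end{equation*}
which is exactly the second equation in \eqref{Ham}. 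Alternatively, one can stay in the Lagrangian picture and verify that the Euler–Lagrange equations $\frac{d}{dt}\frac{\partial L}{\partial\dot\theta}-\frac{\partial L}{\partial\theta}=0$ are equivalent to the standard geodesic equation $\ddot\theta^k + \Gamma^k_{ij}\dot\theta^i\dot\theta^j = 0$ with the Christoffel symbols of $G_W$, and then check that substituting $S=G_W\dot\theta$ transforms this second-order system into the first-order Hamiltonian system \eqref{Ham}; this is a routine but slightly longer computation involving the derivative identity $\partial_\theta(G_W^{-1}) = -G_W^{-1}(\partial_\theta G_W)G_W^{-1}$.

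The main obstacle, such as it is, is bookkeeping rather than conceptual: one must be careful with the index/matrix notation in the term $S^T\frac{\partial}{\partial\theta}G_W(\theta)^{-1}S$ — interpreting $\frac{\partial}{\partial\theta}G_W^{-1}$ as the vector (indexed by the free $\theta$-component) of matrices $\partial_{\theta_k}G_W^{-1}$, so that the whole expression is a $d$-vector whose $k$-th entry is $S^T(\partial_{\theta_k}G_W^{-1})S$ — and to confirm this matches what the Legendre transform produces. There is no analytic difficulty here, since everything takes place on the finite-dimensional manifold $\Theta$ and $G_W(\theta)$ is smooth and positive definite by assumption; the infinite-dimensional elliptic machinery has already been discharged in establishing Proposition \ref{prop: metric tensor on theta}. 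I would therefore present the argument as: (i) state the geodesic energy functional, (ii) Legendre transform to get $H=\tfrac12 S^TG_W^{-1}S$, (iii) write Hamilton's equations and simplify to obtain \eqref{Ham}.
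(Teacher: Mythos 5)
Your proposal is correct and follows essentially the same route as the paper: write the geodesic energy with Lagrangian $L=\tfrac12\dot\theta^TG_W(\theta)\dot\theta$, pass to the Hamiltonian $H(S,\theta)=\tfrac12 S^TG_W(\theta)^{-1}S$ via the Legendre transform $S=G_W(\theta)\dot\theta$, and read off Hamilton's equations. The additional remarks on index bookkeeping and the equivalent Christoffel-symbol formulation are fine but not needed beyond what the paper does.
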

    \begin{proof}
        In geometry, the square of geodesic distance $d_W$ between $\rho(\cdot,\theta^0)$ and $\rho(\cdot,\theta^1)$ equals the energy functional:
        \begin{equation}
        d^2_W(\rho^0(\cdot,\theta),\rho^1(\cdot,\theta))=\inf_{\theta(t)\in C^1(0,1)}\{\int_0^1 \dot\theta(t)^T G_W(\theta) \dot\theta(t) dt\colon  \theta(0)=\theta^0, \theta(1)=\theta^1\}.
        \label{eqn:d_G}
        \end{equation} 
        The minimizer of \eqref{eqn:d_G} satisfies the geodesic equation. Let us write down the Lagrangian $L(\dot\theta,\theta)=\frac{1}{2}\dot\theta^T G_W(\theta) \dot\theta$. The geodesic satisfies the Euler-Lagrange equation 
        \begin{equation*}
        \frac{d}{dt}\nabla_{\dot\theta}L(\dot\theta, \theta)=\nabla_\theta L(\dot\theta, \theta).
        \end{equation*}
        By the Legendre transformation, 
        \begin{equation*}
        H(S,\theta)=\sup_{\dot\theta\in T_\theta(\Theta)}S^T\dot\theta-L(\dot\theta, \theta).
        \end{equation*}
        Then $S= G_W(\theta)\dot\theta$ and $H(S,\theta)=\frac{1}{2}S^TG_W(\theta)^{-1}S$. Thus we derive the Hamilton's equations
        \begin{equation*}
        \dot \theta=\partial_SH(\theta, S), \quad \dot S=-\partial_\theta H(\theta, S). 
        \end{equation*}
        This recovers \eqref{Ham}. 
    \end{proof}
    \begin{remark}
        We recall the Wasserstein geodesic equation in $(\mathcal{P}_2(\Omega), W_2)$: 
        \begin{equation*}
        \begin{cases}
        \frac{\partial\rho(t,x)}{\partial t}+\nabla\cdot(\rho(t,x)\nabla\Phi(t,x))=0\\
        \frac{\partial\Phi(t,x)}{\partial t}+\frac{1}{2}(\nabla\Phi(t,x))^2=0
        \end{cases}
        \end{equation*}    
        The above PDE pair contains both continuity equation and Hamilton-Jacobi equation. Our equation \eqref{Ham} can be viewed as the continuity equation and Hamilton-Jacobi equation on the parameter space. The difference is that when restricted to a statistical model, the continuity equation and the associated Hamilton-Jacobi equation can only flow in the probability densities constrained in $\rho(\Theta)$. 
    \end{remark}
    \begin{remark}
        \label{rem:w equal d}
        If the optimal flow $\rho_t, 0\leq t \leq 1$ in the continuity equation \eqref{BB2}
        totally lies in the probability subspace parameterized by $\theta$, then the two geodesic distances coincide: $$d_W(\theta^0,\theta^1)=W_2(\rho^0(\cdot,\theta),\rho^1(\cdot,\theta)).$$
        It is well known that the optimal transportation path between two Gaussian distributions will also be Gaussian distributions. Hence when $\rho(\cdot,\theta)$ are Gaussian measures, the above condition is satisfied. This means Gaussian is a totally geodesic submanifold. In general, $d_W$ will be different from the $L^2$ Wasserstein metric. We will demonstrate this fact in our numerical examples. 
    \end{remark}        
    %        \subsection{Wasserstein gradient flow}
    \subsection{Wasserstein natural gradient}     
    Based on the Riemannian structure established in the previous section, we are able to introduce the gradient flow on the parameter space $(\Theta,g_{\theta})$. Given an objective function $R(\theta)$, the associated gradient flow will be: 
    \begin{equation*}
    \frac{d\theta}{dt}=-\nabla_gR(\theta).
    \end{equation*}
    Here $\nabla_g$ is the Riemannian gradient operator satisfying
    \begin{equation*}
    g_{\theta}(\nabla_gR(\theta),\xi)=\nabla_\theta R(\theta)\cdot \xi
    \end{equation*}
    for any tangent vector $\xi\in T_\theta\Theta$, where $\nabla_\theta$ represents the Euclidean gradient operator. 
    \begin{proposition}
        The gradient flow of function $R\in C^{1}(\Theta)$ in $(\Theta, g_{\theta})$ satisfies
        \begin{equation}
        \label{eqn:gradient flow}
        \frac{d\theta}{dt}=-G_W(\theta)^{-1}\nabla_{\theta} R(\theta).
        \end{equation}
    \end{proposition}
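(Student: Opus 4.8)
The plan is to unwind the definition of the Riemannian gradient $\nabla_g$ and translate it into matrix form using the representation $g_\theta(\xi,\eta)=\xi^T G_W(\theta)\eta$ established in Proposition \ref{prop: metric tensor on theta}. By definition, $\nabla_g R(\theta)$ is the unique tangent vector satisfying $g_\theta(\nabla_g R(\theta),\xi)=\nabla_\theta R(\theta)\cdot\xi$ for every $\xi\in T_\theta\Theta$. Substituting the bilinear-form expression for $g_\theta$, this becomes $(\nabla_g R(\theta))^T G_W(\theta)\,\xi=(\nabla_\theta R(\theta))^T\xi$ for all $\xi$, so the two linear functionals of $\xi$ must coincide, which forces $G_W(\theta)^T\nabla_g R(\theta)=\nabla_\theta R(\theta)$.

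Next I would use that $G_W(\theta)$ is symmetric and positive definite. Symmetry follows from the formula $G_W(\theta)=G_\theta^T A(\theta)G_\theta$ in Proposition \ref{prop: metric tensor on theta}, since $A(\theta)$ is symmetric (the operator $(-\Delta_\theta)^{-1}$ is self-adjoint for the $L^2(\Omega)$ pairing, as $-\Delta_\theta=-\nabla\cdot(\rho(x,\theta)\nabla)$ is formally self-adjoint) and $G_\theta$ is symmetric. Positive-definiteness is exactly the non-degeneracy hypothesis assumed when $(\Theta,g_\theta)$ was declared a Riemannian manifold. Hence $G_W(\theta)$ is invertible and $G_W(\theta)^T=G_W(\theta)$, so the relation above yields
\begin{equation*}
\nabla_g R(\theta)=G_W(\theta)^{-1}\nabla_\theta R(\theta).
\end{equation*}

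Finally, substituting this into the gradient-flow equation $\frac{d\theta}{dt}=-\nabla_g R(\theta)$ gives \eqref{eqn:gradient flow}. The argument is purely algebraic once the structural facts of the previous subsection are in hand; the only nontrivial ingredient is the invertibility of $G_W(\theta)$, and since that is furnished by the positive-definiteness guaranteed by the non-degeneracy of the statistical model, I do not expect any genuine obstacle here.
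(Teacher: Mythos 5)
Your argument is correct and is essentially the paper's own proof: unwind the definition of the Riemannian gradient, write $g_\theta$ in matrix form via $G_W(\theta)$, and invert. The extra remarks on symmetry (via self-adjointness of $(-\Delta_\theta)^{-1}$) and invertibility (via the non-degeneracy assumption) are accurate details the paper leaves implicit.
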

    \begin{proof}
        By the definition of gradient operator, 
        \begin{equation*}
        \nabla_{g}R(\theta)^TG_W(\theta)\xi    =\nabla_\theta R(\theta)\cdot \xi,
        \end{equation*}
        for any $\xi$. Thus 
        $\nabla_{g}R(\theta)=G_W(\theta)^{-1}\nabla_\theta R(\theta)$.
    \end{proof}
    
    When $R(\theta)=R(\rho(\cdot,\theta))$, i.e. the function is implicitly determined by the density $\rho(\cdot,\theta)$, the Riemannian gradient can naturally reflect the change in the probability density domain. 
    This will be expressed in our experiments by using Forward-Euler to solve the gradient flow numerically. The iteration writes
    
    \begin{equation}
    \label{eqn:wasserstein natural gd}
    \theta^{n+1}=\theta^n-\tau G_W(\theta^n)^{-1} \nabla_{\theta} R(\rho(\cdot,\theta^n)).
    \end{equation}
    This iteration of $\theta^{n+1}$ can also be understood as an approximate solution to the following problem:
    \begin{equation*}
    \mathop{\arg\min}_{\theta}\  R(\rho(\cdot,\theta))+\frac{d_W(\rho(\cdot,\theta^n),\rho(\cdot,\theta))^2}{2\tau}.
    \end{equation*}
    The approximation goes as follows. Note the Wasserstein metric tensor satisfies 
    $$d_W(\rho(\cdot, \theta+\Delta \theta),\rho(\cdot, \theta))^2=\frac{1}{2}(\Delta\theta)^T G_W(\theta) (\Delta\theta) + o\left( (\Delta \theta)^2\right)\quad \text{as} \quad \Delta \theta \to 0,$$
    and $R(\rho(\cdot,\theta+\Delta\theta))=R(\rho(\cdot,\theta))+\langle \nabla_{\theta} R(\rho(\cdot,\theta)),\Delta\theta\rangle+O((\Delta\theta)^2)$. Ignoring high-order items we obtain
    \begin{equation*}
    \theta^{n+1}=\mathop{\arg\min}_{\theta}\  \langle \nabla_{\theta} R(\rho(\cdot,\theta^n)),\theta-\theta^n\rangle+\frac{(\theta-\theta^n)^TG_W(\theta^n)(\theta-\theta^n)}{2\tau}.
    \end{equation*}
    This recovers \eqref{eqn:wasserstein natural gd}. It explains $\eqref{eqn:wasserstein natural gd}$ is the steepest descent with respect to the change of probability distributions measured by $W_2$.
    
    In fact, \eqref{eqn:wasserstein natural gd} shares the same spirit in natural gradient \cite{NG} with respect to Fisher-Rao metric. To avoid ambiguity, we call it the Fisher-Rao natural gradient. It considers $\theta^{n+1}$ as an approximate solution of
    \begin{equation*}
    \mathop{\arg\min}_{\theta}\  R(\rho(\cdot,\theta))+\frac{D_{\textrm{KL}}(\rho(\cdot,\theta)\|\rho(\cdot,\theta^n))}{\tau},
    \end{equation*}
    where $D_{\textrm{KL}}$ represents the Kullback-Leibler divergence, i.e. given two densities $p, q$ on $\Omega$, then
    \[D_{\textrm{KL}}(p\| q)=\int_\Omega p(x)\log(\frac{p(x)}{q(x)}) dx.\] 
    In our case, we replace the KL divergence by the constrained Wasserstein metric. For this reason, we call \eqref{eqn:wasserstein natural gd} the Wasserstein natural gradient descent method.
    
    \subsection{1D densities}
    In the following we concentrate on the one dimensional sample space, i.e. $\Omega=\mathbb{R}$. We show that $g_{\theta}$ exhibits an %elegant 
    explicit formula. From it, we demonstrate that when the minimization is modeled by the Wasserstein distance, namely $R(\rho(\cdot,\theta))$ is related to $W_2$, then $G_W(\theta)$ will approach the Hessian matrix of $R(\rho(\cdot,\theta))$ at the minimizer.    
    
    \begin{proposition}
        \label{thm: 1-d tensor}
        Suppose $\Omega=\mathbb{R}, \Theta=\bR^d$ is the Euclidean space, and assumption \ref{assume: elliptic} is satisfied, then the Riemannian inner product on the Wasserstein statistical manifold $(\Theta,g_\theta)$ has explicit form  
        \begin{align}
        G_W(\theta) = \int_{\bR} \frac{1}{\rho(x,\theta)}(\nabla_{\theta} F(x,\theta))^T\nabla_{\theta} F(x,\theta)dx,
        \label{eqn:one dimension G}
        \end{align}
        such that $g_{\theta}(\xi,\eta)=\langle \xi, G_W(\theta) \eta \rangle$.  
    \end{proposition}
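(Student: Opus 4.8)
\emph{Proof proposal.} The plan is to exploit that in one dimension the divergence is simply $\partial_x$, so the elliptic equation defining $\Phi_\xi$ in Definition \ref{def:metric tensor} can be integrated explicitly instead of inverted abstractly. Writing $F(x,\theta)=\int_{-\infty}^x\rho(y,\theta)\,dy$ for the cumulative distribution function and differentiating under the integral sign, one has $\langle\nabla_\theta F(x,\theta),\xi\rangle=\int_{-\infty}^x\langle\nabla_\theta\rho(y,\theta),\xi\rangle\,dy$. Integrating the defining relation $-\partial_x\bigl(\rho(x,\theta)\,\partial_x\Phi_\xi(x)\bigr)=\langle\nabla_\theta\rho(x,\theta),\xi\rangle$ from a fixed base point then yields
\[
\rho(x,\theta)\,\partial_x\Phi_\xi(x)=-\langle\nabla_\theta F(x,\theta),\xi\rangle+c
\]
for some constant $c$ (depending on $\theta$ and $\xi$, not on $x$). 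Note that $\partial_x\Phi_\xi$ is unaffected by the additive-constant ambiguity in $\Phi_\xi$, so this determines $\partial_x\Phi_\xi$ once $c$ is pinned down.

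Second, I would show $c=0$. Since $F(x,\theta)\to0$ as $x\to-\infty$ and $F(x,\theta)\to1$ as $x\to+\infty$ for every $\theta$, the partial integrals $\int_{-\infty}^x\nabla_\theta\rho(y,\theta)\,dy$ tend to $0$ and to $\nabla_\theta\!\int_{\bR}\rho(y,\theta)\,dy=0$ respectively, so $\nabla_\theta F(x,\theta)\to0$ at both ends. If $c\neq0$, pick $N$ with $|\langle\nabla_\theta F(x,\theta),\xi\rangle|\le|c|/2$ for $x\ge N$, so that $\rho(x,\theta)\,|\partial_x\Phi_\xi(x)|^2=\rho(x,\theta)^{-1}\bigl|\langle\nabla_\theta F(x,\theta),\xi\rangle-c\bigr|^2\ge\tfrac{c^2}{4}\,\rho(x,\theta)^{-1}$ there. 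The Cauchy--Schwarz inequality $\bigl(\int_a^b1\,dx\bigr)^2\le\int_a^b\rho(x,\theta)\,dx\cdot\int_a^b\rho(x,\theta)^{-1}\,dx\le\int_a^b\rho(x,\theta)^{-1}\,dx$ forces $\int_N^{\infty}\rho(x,\theta)^{-1}\,dx=+\infty$, hence $\int_{\bR}\rho\,|\partial_x\Phi_\xi|^2\,dx=+\infty$, contradicting the finite-energy requirement \eqref{eqn: regularity of phi, whole space} in Assumption \ref{assume: elliptic}. Therefore $c=0$ and $\partial_x\Phi_\xi(x)=-\langle\nabla_\theta F(x,\theta),\xi\rangle/\rho(x,\theta)$.

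Third, I would substitute this into the metric tensor of Definition \ref{def:metric tensor}:
\[
g_\theta(\xi,\eta)=\int_{\bR}\rho(x,\theta)\,\partial_x\Phi_\xi(x)\,\partial_x\Phi_\eta(x)\,dx=\int_{\bR}\frac{\langle\nabla_\theta F(x,\theta),\xi\rangle\,\langle\nabla_\theta F(x,\theta),\eta\rangle}{\rho(x,\theta)}\,dx,
\]
and rewrite $\langle\nabla_\theta F,\xi\rangle\,\langle\nabla_\theta F,\eta\rangle=\xi^T(\nabla_\theta F)^T(\nabla_\theta F)\,\eta$ to read off $G_W(\theta)=\int_{\bR}\rho(x,\theta)^{-1}(\nabla_\theta F(x,\theta))^T\nabla_\theta F(x,\theta)\,dx$, which is \eqref{eqn:one dimension G}. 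Since $\Theta$ is Euclidean, Proposition \ref{prop: metric tensor on theta} already gives $g_\theta(\xi,\eta)=\langle\xi,G_W(\theta)\eta\rangle$, so this identity pins down the matrix $G_W(\theta)$.

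I expect the main obstacle to be the vanishing of the integration constant $c$: passing from the second-order equation to a first-order relation introduces one spurious degree of freedom, and killing it is exactly where the decay of $\rho$ at infinity and the finite-energy hypothesis \eqref{eqn: regularity of phi, whole space} enter. This is the explicit one-dimensional counterpart of the uniqueness statement proved just after Assumption \ref{assume: elliptic}. The remaining steps --- differentiating $\int_{-\infty}^x\rho$ under the integral sign and exchanging the $\theta$-derivative with the limits $x\to\pm\infty$ --- are routine under the standing smoothness assumptions, and I would only briefly remark on them.
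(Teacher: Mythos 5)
Your proposal is correct and follows essentially the same route as the paper: integrate the one-dimensional elliptic equation once to obtain $\rho\,\partial_x\Phi_\xi=\pm\langle\nabla_\theta F,\xi\rangle$ and substitute into the metric tensor of Definition \ref{def:metric tensor}. The only difference is that you explicitly rule out a nonzero integration constant via the finite-energy condition and the divergence of $\int\rho^{-1}$ on the tails, a point the paper's proof passes over silently by taking the antiderivative vanishing at $-\infty$; this is a welcome tightening rather than a different argument.
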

    \begin{proof}
        When $\Omega=\mathbb{R}$, we have
        \begin{equation*}
        g_{\theta}(\xi,\eta)=\int_\bR \rho(x,\theta) \Phi'_\xi(x) \cdot \Phi'_\eta(x) dx,
        \end{equation*}
        where $\langle \nabla_{\theta} \rho(x, \theta), \xi \rangle = \left(\rho \Phi'_{\xi}(x)\right)'$ and $\langle \nabla_{\theta} \rho(x, \theta), \eta \rangle = \left(\rho \Phi'_{\eta}(x)\right)'$. \\
        Integrating the two sides yields
        \[\int_{-\infty}^y \langle \nabla_{\theta} \rho(x,\theta),\xi \rangle = \rho \Phi_{\xi}'(y).\]
        Denote by $F(y)=\int_{-\infty}^y \rho(x) dx$ the cumulative distribution function of $\rho$, then 
        \[\Phi'_{\xi}(x)=\frac{1}{\rho(x,\theta)}\langle \nabla_{\theta} F(x,\theta), \xi \rangle,\]
        and
        \begin{equation*}
        g_{\theta}(\xi,\eta)=\int_\bR \frac{1}{\rho(x,\theta)} \langle \nabla_{\theta} F(x,\theta), \xi \rangle \langle \nabla_{\theta} F(x,\theta), \eta \rangle dx.
        \end{equation*}
        This means $g_{\theta}(\xi,\eta)=\langle \xi, G_W(\theta) \eta \rangle$ and we obtain \eqref{eqn:one dimension G}. Since assumption \ref{assume: elliptic} is satisfied, this integral is well-defined.
    \end{proof}
    
    Recall the Fisher-Rao metric tensor, also known as the Fisher information matrix:
    \begin{equation*}
    \begin{aligned}
    G_F(\theta) = &\int_{\bR} \rho(x,\theta)(\nabla_{\theta} \log \rho(x,\theta))^T\nabla_{\theta} \log \rho(x,\theta)dx \\
    =&\int_{\bR} \frac{1}{\rho(x,\theta)}(\nabla_{\theta} \rho(x,\theta))^T\nabla_{\theta} \rho(x,\theta)dx,
    \end{aligned}
    \end{equation*}
    where we use the fact $\nabla_\theta\log\rho(x,\theta)=\frac{1}{\rho(x,\theta)}\nabla_\theta\rho(x,\theta)$. Compared to the Fisher-Rao metric tensor, our Wasserstein metric tensor $G_W(\theta)$ only changes the density function in the integral to the corresponding cumulative distribution function. We note the condition that $\rho$ is everywhere positive can be relaxed, for example, by assuming each component of $\nabla_{\theta} F(x,\theta)$, when viewed as a density in $\bR$, is absolutely continuous with respect to $\rho(x,\theta)$. Then we can use the associated Radon-Nikodym derivative to define the integral. This treatment is similar to the one for Fisher-Rao metric tensor \cite{IG2}.
    
    Now we turn to study the computational property of the natural gradient method with the Wasserstein metric. For standard Fisher-Rao natural gradient, it is known that when $R(\rho(\cdot,\theta))=\text{KL}(\rho(\cdot,\theta),\rho(\cdot,\theta^*))$, then
    \[\lim_{\theta \to \theta^*} G_F(\theta) = \nabla^2_{\theta} R(\rho(\cdot,\theta^*)). \]
    Hence,  $G_F(\theta)$ will approach the Hessian of $R$ at the minimizer. Regarding this, the Fisher-Rao natural gradient descent iteration
    $$ \theta^{n+1}=\theta^n-G_F(\theta^n)^{-1} \nabla_\theta R(\rho(\cdot, \theta))$$
    will be asymptotically Newton method for KL divergence related minimization. 
    
    We would like to demonstrate a similar result for the Wasserstein natural gradient. In other words, we shall show the Wasserstein natural gradient will be asymptotically Newton's method for the Wasserstein distance-related minimization. To achieve this, we start by providing a detailed description of the Hessian matrix for the Wasserstein metric in Theorem \ref{thm: hessian of R}. Throughout the following discussion, we use the notation $T'(x,\theta)$ to represent the derivative of $T$ with respect to the $x$ variable. We make the following assumption, which is needed in the proof of Theorem \ref{thm: hessian of R} to interchange the differentiation and integration.
    \begin{assumption}
        For any $\theta_0 \in \Theta$, there exists a neighborhood $N(\theta_0) \subset \Theta$, such that 
        \begin{align*}
        &\int_\Omega \max_{\theta \in N(\theta_0)}|\frac{\partial^2 F(x,\theta)}{\partial \theta_i \partial \theta_j}|\ dx < +\infty \\ 
        &\int_\Omega \max_{\theta \in N(\theta_0)} |\frac{\partial \rho(x,\theta)}{\partial \theta_i}|\ dx < +\infty \\
        &\int_\Omega \max_{\theta \in N(\theta_0)} \frac{1}{\rho(x,\theta)}|\frac {\partial F(x,\theta)}{\partial {\theta_i}}  \frac{\partial F(x,\theta)}{\partial {\theta_j}}| dx < +\infty
        \end{align*}
        \label{assume: 2}
        for each $1\leq i,j \leq d$.
    \end{assumption}
    \begin{theorem}
        Consider the statistical model $(\Omega,\Theta,\rho)$, in which $\rho(\cdot, \theta)$ is positive and $\Omega$ is a compact region in $\bR$. Suppose Assumption \ref{assume: elliptic} and \ref{assume: 2} are satisfied and $T'(x,\theta)$ is uniformly bounded for all $x$ when $\theta$ is fixed. If the objective function has the form
        \begin{equation*}
        R(\rho(\cdot,\theta))=\frac{1}{2}\left(W_2(\rho(\cdot,\theta),\rho^*)\right)^2,
        \end{equation*}
        where $\rho^*$ is the ground truth density, then
        \begin{equation}
        \nabla^2_\theta R(\rho(\cdot,\theta))= \int_\Omega (T(x,\theta)-x) \nabla^2_\theta F(x,\theta) dx + \int_\Omega \frac{T'(x,\theta)}{\rho(x,\theta)}(\nabla_{\theta} F(x,\theta))^T \nabla_\theta F(x,\theta) dx,
        \label{eqn:hessian of R}
        \end{equation}
        in which $T(\cdot,\theta)$ is the optimal transport map between $\rho(\cdot,\theta)$ and $\rho^*$, the function $F(\cdot,\theta)$ is the cumulative distribution function of $\rho(\cdot,\theta)$.
        \label{thm: hessian of R}
    \end{theorem}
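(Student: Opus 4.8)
The plan is to exploit the one-dimensional structure of optimal transport, where everything can be written in terms of cumulative distribution functions. Recall that in $\bR$ the optimal transport map between $\rho(\cdot,\theta)$ and $\rho^*$ is the monotone rearrangement $T(x,\theta)=(F^*)^{-1}(F(x,\theta))$, where $F(\cdot,\theta)$ and $F^*$ are the CDFs of $\rho(\cdot,\theta)$ and $\rho^*$ respectively. Consequently the squared Wasserstein distance has the closed form
\begin{equation*}
2R(\rho(\cdot,\theta)) = \left(W_2(\rho(\cdot,\theta),\rho^*)\right)^2 = \int_\Omega (T(x,\theta)-x)^2 \rho(x,\theta)\, dx = \int_0^1 \left((F^*)^{-1}(s)-F^{-1}(s,\theta)\right)^2 ds,
\end{equation*}
the last equality by the substitution $s=F(x,\theta)$. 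I would work primarily with the first of the two ``$x$-space'' expressions, since the statement of the theorem is phrased in terms of $F$, $T$, and $\rho$ on $\Omega$. The first step is therefore to record this formula and justify, using Assumption \ref{assume: 2}, that we may differentiate twice under the integral sign in $\theta$.

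Next I would compute $\nabla_\theta R$. Differentiating $\int_\Omega (T-x)^2\rho\, dx$ is awkward because both $T$ and $\rho$ depend on $\theta$; it is cleaner to differentiate the quantile form $\int_0^1 ((F^*)^{-1}(s)-F^{-1}(s,\theta))^2\, ds$, for which only $F^{-1}(\cdot,\theta)$ carries the parameter. Using $\partial_\theta F^{-1}(s,\theta) = -\,\partial_\theta F(x,\theta)/\rho(x,\theta)$ evaluated at $x=F^{-1}(s,\theta)$ (the implicit function theorem applied to $F(F^{-1}(s,\theta),\theta)=s$), and then substituting back $s=F(x,\theta)$, I expect to obtain
\begin{equation*}
\nabla_\theta R(\rho(\cdot,\theta)) = \int_\Omega (T(x,\theta)-x)\,\nabla_\theta F(x,\theta)\, dx,
\end{equation*}
after also using the identity $(F^*)^{-1}(F(x,\theta)) - x = T(x,\theta)-x$. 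This intermediate gradient formula is the backbone of the argument, and it is worth isolating.

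For the Hessian I would differentiate the gradient formula once more in $\theta$. The product rule gives two terms: one from differentiating $\nabla_\theta F(x,\theta)$, which immediately yields $\int_\Omega (T(x,\theta)-x)\,\nabla^2_\theta F(x,\theta)\, dx$, the first term in \eqref{eqn:hessian of R}; and one from differentiating the scalar $T(x,\theta)-x$. For the second term I need $\nabla_\theta T(x,\theta)$. Differentiating $T(x,\theta)=(F^*)^{-1}(F(x,\theta))$ in $\theta$ gives $\nabla_\theta T(x,\theta) = ((F^*)^{-1})'(F(x,\theta))\,\nabla_\theta F(x,\theta)$, and since $((F^*)^{-1})'(F(x,\theta)) = T'(x,\theta)/\rho(x,\theta)$ — which follows from differentiating $F^*(T(x,\theta))=F(x,\theta)$ in $x$, giving $\rho^*(T)\,T' = \rho$, and noting $((F^*)^{-1})'(s) = 1/\rho^*((F^*)^{-1}(s))$ — this contributes exactly $\int_\Omega \frac{T'(x,\theta)}{\rho(x,\theta)}(\nabla_\theta F(x,\theta))^T\nabla_\theta F(x,\theta)\, dx$, the second term in \eqref{eqn:hessian of R}. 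Assembling the two pieces yields the claimed formula.

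The main obstacle I anticipate is the rigorous justification of differentiating under the integral, both for $\nabla_\theta R$ and especially for $\nabla^2_\theta R$: one must dominate the integrands uniformly in a neighborhood of $\theta_0$, and this is precisely what the three bounds in Assumption \ref{assume: 2} are designed to supply, together with the uniform boundedness of $T'(x,\theta)$ hypothesized in the theorem (needed to control the second term, where $T'$ multiplies $\frac{1}{\rho}|\partial_{\theta_i}F\,\partial_{\theta_j}F|$, and also to control $T(x,\theta)-x$ on compact $\Omega$). A secondary care point is the regularity of $T$ and of $(F^*)^{-1}$: positivity of $\rho$ and $\rho^*$ on the compact $\Omega$ makes $F$, $F^*$ strictly increasing $C^1$ bijections onto $[0,1]$, so $T$ is $C^1$ in $x$ and the chain-rule manipulations above are legitimate; I would state this at the outset. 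Everything else is bookkeeping with the change of variables $s=F(x,\theta)$ and the implicit function theorem.
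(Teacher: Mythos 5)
Your proof is correct and arrives at the same two intermediate identities that drive the paper's argument --- the gradient formula $\nabla_\theta R(\rho(\cdot,\theta)) = \int_\Omega (T(x,\theta)-x)\,\nabla_\theta F(x,\theta)\,dx$ and the relation $\nabla_\theta T(x,\theta) = T'(x,\theta)\nabla_\theta F(x,\theta)/\rho(x,\theta)$ --- but you reach the first of these by a genuinely different route. The paper obtains the gradient from the Kantorovich dual via an envelope argument: freezing the optimal potential $\phi^*$ at the current $\theta$ gives $\nabla_\theta R = \tfrac12\int_\Omega \phi(x,\theta)\nabla_\theta\rho(x,\theta)\,dx$, and the one-dimensional identity $\phi'(x,\theta)=2(x-T(x,\theta))$ together with integration by parts then yields the formula. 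You instead differentiate the explicit quantile representation $\int_0^1\bigl((F^*)^{-1}(s)-F^{-1}(s,\theta)\bigr)^2\,ds$ using the implicit function theorem for $F^{-1}(s,\theta)$ and change variables back to $x$. Your route is more elementary and self-contained (no duality needed), at the cost of being intrinsically one-dimensional and requiring a priori differentiability of $F^{-1}$ in $\theta$, which your positivity remarks do supply; the paper's duality step is the one piece of its argument that would survive in higher dimensions. For the Hessian, your chain-rule computation of $\nabla_\theta T$ from $T=(F^*)^{-1}\circ F$ is equivalent to the paper's differentiation of the Monge--Amp\`ere relation $\rho(x,\theta)=\rho^*(T(x,\theta))T'(x,\theta)$ with respect to $\theta$; both give $\nabla_\theta T=\nabla_\theta F/\rho^*(T)$ and hence the second term of \eqref{eqn:hessian of R}. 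One small point of hygiene: Assumption \ref{assume: 2} is phrased as domination in the $x$-variable over $\Omega$, so when invoking it to differentiate under the integral you should first transport your $s$-integrals back to $x$-space (as you indicate) before appealing to dominated convergence.
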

    \begin{proof}
        We recall the three formulations of OT in section \ref{review} and the following facts for 1D Wasserstein distance. They will be used in the proof.
        
        \noindent(i) When $\Omega \subset \mathbb{R}$, the optimal map will have explicit formula, namely $T(x)=F_1^{-1}(F_0(x))$, where $F_0,F_1$ are cumulative distribution functions of $\rho^0,\rho^1$ respectively. Moreover, $T$ satisfies 
        \begin{equation}
        \rho^0(x)=\rho^1(T(x))T'(x).
        \label{eqn:monge ampere}
        \end{equation}
        
        \noindent(ii) The dual of linear programming \eqref{eq:Kantorovich} has the form
        \begin{equation}
        \label{eqn:Kantorovich dual}
        \max_{\phi} \int_{\Omega} \phi(x)\rho^0(x) dx + \int_{\Omega} \phi^c(x) \rho^1(x) dx,
        \end{equation}
        in which $\phi$ 
        and $\phi^c$ satisfy
        $$\phi^c(y)=\inf_{x \in \Omega} \|x-y\|^2-\phi(x).$$
        
        \noindent(iii) We have the relation $\nabla \phi(x)=2(x-T(x))$ for the optimal $T$ and $\phi$.
        
        Using the above three facts, we have            
        \[R(\rho(\cdot,\theta))=\frac{1}{2}\left(W_2(\rho(\cdot,\theta),\rho^*)\right)^2=\frac{1}{2}\int_\Omega |x-F_*^{-1}(F(x,\theta))|^2\rho(x,\theta) dx,\]
        where $F_*$ is the cumulative distribution function of $\rho^*$. We first compute $\nabla_\theta R(\rho(\cdot,\theta))$. Fix $\theta$ and assume the dual maximum is achieved by $\phi^*$ and $\phi^{c*}$: 
        \[R(\rho(\cdot,\theta))= \frac{1}{2} \int_{\Omega} \phi(x)^*\rho(x,\theta) dx + \int_{\Omega} \phi^{c*}(x) \rho^*(x) dx. \]
        Then, for any $\hat{\theta} \in \Theta$,
        \[R(\rho(\cdot,\hat{\theta})) \leq \frac{1}{2} \int_{\Omega} \phi^*(x)\rho(x,\hat{\theta}) dx + \int_{\Omega} \phi^{c*}(x) \rho^*(x) dx,\]
        and the equality holds when $\hat{\theta}=\theta$. Thus
        \[\nabla_\theta R(\rho(\cdot,\theta))=\nabla_\theta \frac{1}{2}\int_\Omega \phi^*(x) \rho(x,\theta) dx=\frac{1}{2}\int_\Omega \phi(x,\theta)\nabla_\theta \rho(x,\theta) dx,\]
        in which integration and differentiation are interchangeable due to Lebesgue dominated convergence theorem and assumption \ref{assume: 2}. The function $\phi(x,\theta)$ is the Kantorovich potential associated with $\rho(x,\theta)$.
        
        As is mentioned in (iii), $(\phi(x,\theta))'=2(x-T(x,\theta))$, which leads to 
        \[\nabla_\theta R(\rho(\cdot,\theta))= -\int_\Omega (x-T(x,\theta)) \nabla_\theta F(x,\theta) dx.\]
        Differentiation with respect to $\theta$ and interchange integration and differentiation:
        \[\nabla^2_\theta R(\rho(\cdot,\theta))= -\int_\Omega (x-T(x,\theta)) \nabla^2_\theta F(x,\theta) dx + \int_\Omega (\nabla_{\theta} T(x,\theta))^T \nabla_\theta F(x,\theta) dx. \]
        On the other hand, $T$ satisfies the following equation as in \eqref{eqn:monge ampere}:
        \begin{equation}
        \label{eqn:monge ampere theta}
        \rho(x,\theta)=\rho^*(T(x,\theta))T'(x,\theta).
        \end{equation}
        Differentiating with respect to $\theta$ and noticing that the derivative of the right hand side has a compact form: 
        \[\nabla_{\theta} \rho(x,\theta)= \left( \rho^*(T(x,\theta))\nabla_{\theta} T(x,\theta) \right)',\]
        and hence
        \[\nabla_{\theta} T(x,\theta)=\frac{1}{\rho^*(T(x,\theta))}\int_{-\infty}^x \nabla_{\theta} \rho(y,\theta) dy=\frac{\nabla_{\theta} F(x,\theta)}{\rho^*(T(x,\theta))}.\]
        Combining them together we obtain
        \begin{equation}
        \nabla^2_\theta R(\rho(\cdot,\theta))= \int_\Omega (T(x,\theta)-x) \nabla^2_\theta F(x,\theta) dx + \int_\Omega \frac{1}{\rho^*(T(x,\theta))}(\nabla_{\theta} F(x,\theta))^T \nabla_\theta F(x,\theta) dx.
        \end{equation}
        Substituting $\rho^*(T(x,\theta))$ by $\rho(x,\theta)$ and $T(x,\theta)$ based on \eqref{eqn:monge ampere theta}, we obtain \eqref{eqn:hessian of R}. Since assumption \ref{assume: elliptic} and \ref{assume: 2} are satisfied, and $T'(x,\theta)$ is uniformly bounded, the integral in \eqref{eqn:hessian of R} is well-defined.
    \end{proof}
    \begin{proposition}
        Under the condition in Theorem \ref{thm: hessian of R}, if the ground-truth density satisfies $\rho^*=\rho(\cdot,\theta^*)$ for some $\theta^* \in \Theta$, then \[\lim_{\theta \to \theta^*} \nabla^2_\theta R(\rho(\cdot,\theta))=\int_\bR \frac{1}{\rho(x,\theta^*)}(\nabla_{\theta} F(x,\theta^*))^T\nabla_{\theta} F(x,\theta^*)dx=G_W(\theta^*).\]
        \label{prop: hessian and G}
    \end{proposition}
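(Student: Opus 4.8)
The plan is to start from the exact Hessian formula \eqref{eqn:hessian of R} of Theorem \ref{thm: hessian of R} and pass to the limit $\theta\to\theta^*$ term by term, exploiting the fact that when the target density lies in the family, $\rho^*=\rho(\cdot,\theta^*)$, the optimal map $T(\cdot,\theta)$ between $\rho(\cdot,\theta)$ and $\rho^*$ degenerates to the identity as $\theta\to\theta^*$.

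First I would record the limiting behavior of the transport map. Since the sample space is one-dimensional, $T(x,\theta)=F_*^{-1}(F(x,\theta))$ with $F_*=F(\cdot,\theta^*)$; because $\rho$ is smooth in $\theta$ and positive (so $F_*$ is a $C^1$ diffeomorphism onto its image), $F(x,\theta)\to F(x,\theta^*)$ as $\theta\to\theta^*$, hence $T(x,\theta)\to F_*^{-1}(F_*(x))=x$ for each fixed $x\in\Omega$. Differentiating the Monge--Amp\`ere relation \eqref{eqn:monge ampere theta} gives $\frac{T'(x,\theta)}{\rho(x,\theta)}=\frac{1}{\rho^*(T(x,\theta))}=\frac{1}{\rho(T(x,\theta),\theta^*)}$, which by continuity of $\rho(\cdot,\theta^*)$ converges pointwise to $\frac{1}{\rho(x,\theta^*)}$. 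Next I would dispose of the first integral in \eqref{eqn:hessian of R}: on the compact interval $\Omega$ one has $|T(x,\theta)-x|$ bounded by the diameter of $\Omega$ uniformly in $x$ and $\theta$, so $|(T(x,\theta)-x)\nabla^2_\theta F(x,\theta)|\le C\max_{\theta\in N(\theta^*)}|\partial^2_{\theta_i\theta_j}F(x,\theta)|$, which is integrable by the first bound in Assumption \ref{assume: 2}; since the integrand tends to $0$ pointwise, the dominated convergence theorem gives $\int_\Omega(T(x,\theta)-x)\nabla^2_\theta F(x,\theta)\,dx\to 0$. For the second integral, the hypothesis of Theorem \ref{thm: hessian of R} that $T'(x,\theta)$ is uniformly bounded (and, shrinking $N(\theta^*)$, also locally uniformly in $\theta$) gives the domination $\big|\frac{T'(x,\theta)}{\rho(x,\theta)}\partial_{\theta_i}F\,\partial_{\theta_j}F\big|\le C\max_{\theta\in N(\theta^*)}\frac{1}{\rho(x,\theta)}|\partial_{\theta_i}F(x,\theta)\,\partial_{\theta_j}F(x,\theta)|$, integrable by the third bound in Assumption \ref{assume: 2}; combined with the pointwise convergences above, dominated convergence yields
\[
\int_\Omega \frac{T'(x,\theta)}{\rho(x,\theta)}(\nabla_{\theta}F(x,\theta))^T\nabla_\theta F(x,\theta)\,dx\;\longrightarrow\;\int_\Omega \frac{1}{\rho(x,\theta^*)}(\nabla_{\theta}F(x,\theta^*))^T\nabla_\theta F(x,\theta^*)\,dx.
\]
Adding the two limits and recognizing the right-hand side as the explicit one-dimensional Wasserstein metric tensor from Proposition \ref{thm: 1-d tensor} identifies it with $G_W(\theta^*)$, which is the assertion.

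The main obstacle, and really the only non-routine point, is the justification of interchanging the limit and the integral uniformly up to the endpoints of $\Omega$: one must know that $T(\cdot,\theta)\to\mathrm{id}$ and $T'(\cdot,\theta)\to 1$ pointwise with no blow-up of the integrands, which is precisely why the uniform bound on $T'$ and the three integrability conditions of Assumption \ref{assume: 2} were built into the hypotheses. Everything else is bookkeeping with the formulas already proved in Theorem \ref{thm: hessian of R} and Proposition \ref{thm: 1-d tensor}.
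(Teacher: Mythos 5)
Your proof is correct and follows essentially the same route as the paper, whose own argument is simply the observation that $T(x,\theta)-x\to 0$ and $T'(x,\theta)\to 1$ as $\theta\to\theta^*$, after which Theorem \ref{thm: hessian of R} gives the result. The additional work you do --- identifying $T=F_*^{-1}\circ F(\cdot,\theta)$, deriving the pointwise limits, and invoking dominated convergence with the bounds of Assumption \ref{assume: 2} and the uniform bound on $T'$ --- is exactly the justification the paper leaves implicit, so there is no substantive difference in approach.
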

    \begin{proof}
        When $\theta$ approaches $\theta^*$, $T(x,\theta)-x$ will go to zero and $T'$ will go to the identity. We finish the proof by the result in Theorem \ref{thm: hessian of R}.
    \end{proof}
    Proposition \ref{prop: hessian and G} explains that the Wasserstein natural gradient descent is asymptotically Newton method for the Wasserstein metric based minimization. The preconditioner $G_W(\theta)$ equals the Hessian matrix of $R$ at the ground truth. Moreover, the formula \eqref{eqn:hessian of R} contains more information than proposition \ref{prop: hessian and G}, and they can be used to find suitable Hessian-like preconditioners. For example, we can see the second term in \eqref{eqn:hessian of R} is different from $G_W(\theta)$ when $\theta \neq \theta^*$. It seems more accurate to use the term 
    \begin{equation}
    \bar{G}_W(\theta):=\int \frac{T'(x,\theta)}{\rho(x,\theta)}(\nabla_{\theta} F(x,\theta))^T\nabla_{\theta} F(x,\theta)dx
    \end{equation}
    to approximate the Hessian of $R(\rho(\cdot,\theta))$. When $\theta$ is near to $\theta^*$, $\bar{G}_W$ is likely to achieve slightly faster convergence to $\nabla^2_{\theta} R(\rho(\cdot,\theta^*))$ than $G_W$. However, the use of $\bar{G}_W(\theta)$ could also have several difficulties. The presence of $T'(x,\theta)$ limits its application to general minimization problem in space $\Theta$ which does not involve a Wasserstein metric objective function. Also, the computation of $T'(x,\theta)$ might suffer from potential numerical instability,  especially when $T$ is not smooth, which is often the case when the ground-truth density is a sum of delta functions. This fact is also expressed in our numerical examples.
    
    \section{Examples}
    \label{section:examples}
    In this section, we consider several concrete statistical models. We compute the related metric tensor $G_W(\theta)$, either explicitly or numerically, and further calculate the geodesic in the Wasserstein statistical manifold. Moreover, we test the Wasserstein natural gradient descent method in the Wasserstein distance-based inference and fitting problems \cite{Bernton2017Inference}. We show that the preconditioner $G_W(\theta)$ exhibits promising performance, leading to stable and fast convergence of the iterations. 
    We also compare our results with the Fisher-Rao natural gradient.
    \subsection{Gaussian measures}
    \label{eg: Gaussian}
    We consider the multivariate Gaussian densities $\cN(\mu,\Sigma)$ in $\bR^n$:
    \[\rho(x,\theta)=\frac{1}{\sqrt{\det(2\pi\Sigma)}}\exp\left(-\frac{1}{2}(x-\mu)^T\Sigma^{-1}(x-\mu)\right),\]
    where $\theta=(\mu,\Sigma) \in \Theta := \bR^n \times \text{Sym}^+(n,\bR)$. Here $\text{Sym}^+(n,\bR)$ is the $n\times n$ positive symmetric matrix {set, which is an open subset of the $n\times n$ symmetric matrix vector space $\text{Sym}(n,\bR)$. This implies that the tangent space at each $\Sigma$ is $\text{Sym}(n,\bR)$ with metric tensor tr($S_1S_2$) for tangent vectors $S_1,S_2$, i.e., the tangent boudle is trivial. Due to this reason, we drop the $\theta$-subscript in inner product define on $\theta$, i.e, using $\langle \nabla_{\theta} \rho(x,\theta), \xi \rangle$ here instead of $\langle \nabla_{\theta} \rho(x,\theta), \xi \rangle_{\theta}$.}  We obtain an explicit formula for $g_{\theta}$ by using definition \ref{def:metric tensor}. 
    \begin{proposition}
        \label{prop: multivariate gaussian}
        The Wasserstein metric tensor for the multivariate Gaussian model is
        \[g_{\theta}\left(\xi,\eta\right)=\langle\dot{\mu_1},\dot{\mu_2}\rangle+\mathrm{tr}(S_1\Sigma S_2),\]
        for any $\xi,\eta \in T_{\theta}\Theta$. Here $\xi=(\dot{\mu}_1,\dot{\Sigma_1})$ and $\eta=(\dot{\mu}_2,\dot{\Sigma_2})$, in which $\dot{\mu}_1,\dot{\mu}_2 \in \bR^n, \dot{\Sigma_1}, \dot{\Sigma_2} \in \mathrm{Sym}(n,\bR)$, and the symmetric matrix $S_1,S_2$ satisfy $\dot{\Sigma}_1=S_1\Sigma+\Sigma S_1, \dot{\Sigma}_2=S_2\Sigma+\Sigma S_2$. 
    \end{proposition}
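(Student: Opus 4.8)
The plan is to apply Definition~\ref{def:metric tensor} directly: for a tangent vector $\xi=(\dot\mu,\dot\Sigma)\in T_\theta\Theta$ I must solve the elliptic equation $\langle\nabla_\theta\rho(x,\theta),\xi\rangle=-\nabla\cdot(\rho\,\nabla\Phi_\xi)$ in closed form, and then evaluate $g_\theta(\xi,\eta)=\int_{\bR^n}\rho(x,\theta)\,\nabla\Phi_\xi(x)\cdot\nabla\Phi_\eta(x)\,dx$. Since the right-hand side depends linearly on $\xi$, I will split $\Phi_\xi=\Phi_{\dot\mu}+\Phi_{\dot\Sigma}$ and treat the mean and covariance perturbations separately. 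For the mean part, using $\nabla_x\log\rho=-\Sigma^{-1}(x-\mu)$ one gets that the directional derivative of $\rho$ along $\dot\mu$ equals $\rho\,(x-\mu)^T\Sigma^{-1}\dot\mu$, and I claim the linear potential $\Phi_{\dot\mu}(x)=\dot\mu^T x$ solves the equation: indeed $-\nabla\cdot(\rho\,\dot\mu)=-\dot\mu\cdot\nabla\rho=\rho\,(x-\mu)^T\Sigma^{-1}\dot\mu$, so $\nabla\Phi_{\dot\mu}=\dot\mu$.

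For the covariance part I will use the matrix-calculus identities $\partial\log\det\Sigma=\mathrm{tr}(\Sigma^{-1}\partial\Sigma)$ and $\partial(\Sigma^{-1})=-\Sigma^{-1}(\partial\Sigma)\Sigma^{-1}$ to write the directional derivative of $\rho$ along a symmetric $\dot\Sigma$ as $\rho\bigl[\tfrac12(x-\mu)^T\Sigma^{-1}\dot\Sigma\,\Sigma^{-1}(x-\mu)-\tfrac12\mathrm{tr}(\Sigma^{-1}\dot\Sigma)\bigr]$. I then try the quadratic ansatz $\Phi_{\dot\Sigma}(x)=\tfrac12(x-\mu)^T M(x-\mu)$ with $M$ symmetric; substituting $\nabla\Phi_{\dot\Sigma}=M(x-\mu)$ and $\nabla\rho=-\rho\,\Sigma^{-1}(x-\mu)$ gives $-\nabla\cdot(\rho\,\nabla\Phi_{\dot\Sigma})=\rho\bigl[(x-\mu)^T M\Sigma^{-1}(x-\mu)-\mathrm{tr}(M)\bigr]$. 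Matching the quadratic parts (symmetrizing the kernels) reduces to the Lyapunov equation $M\Sigma+\Sigma M=\dot\Sigma$, i.e. $M$ is exactly the matrix $S$ of the statement; and the zeroth-order terms then match automatically because $\mathrm{tr}(\Sigma^{-1}\dot\Sigma)=\mathrm{tr}(\Sigma^{-1}M\Sigma)+\mathrm{tr}(M)=2\,\mathrm{tr}(M)$. Before using these as the potentials I should check that case~(2) of Assumption~\ref{assume: elliptic} holds: since $\Phi_\xi$ is at most quadratic and $\rho$ is Gaussian, both $\int\rho\|\nabla\Phi_\xi\|^2\,dx$ and $\int\rho|\Phi_\xi|^2\,dx$ are finite Gaussian moments, so the preceding uniqueness proposition applies and $\Phi_\xi$ is determined up to an additive constant — this also discharges the claim from the introduction that the Assumption holds for Gaussian measures on $\bR^n$.

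Finally I substitute $\nabla\Phi_\xi=\dot\mu_1+S_1(x-\mu)$ and $\nabla\Phi_\eta=\dot\mu_2+S_2(x-\mu)$ into the metric and expand. Using $\int\rho\,dx=1$, $\int\rho\,(x-\mu)\,dx=0$ (which kills both cross terms), and $\int\rho\,(x-\mu)(x-\mu)^T\,dx=\Sigma$, the surviving contributions are $\langle\dot\mu_1,\dot\mu_2\rangle+\mathrm{tr}(S_1S_2\Sigma)$, and $\mathrm{tr}(S_1S_2\Sigma)=\mathrm{tr}(S_1\Sigma S_2)$ by cyclicity of the trace together with the symmetry of $\Sigma$, $S_1$, $S_2$, which is precisely the asserted formula.

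I expect the main obstacle to be the covariance computation: correctly differentiating $\log\det\Sigma$ and $\Sigma^{-1}(x-\mu)$ along a symmetric-matrix direction, guessing the right quadratic ansatz, and checking that the a priori non-symmetric product $M\Sigma^{-1}$ can be symmetrized inside the quadratic form so that the Lyapunov equation $M\Sigma+\Sigma M=\dot\Sigma$ is genuinely the matching condition, while simultaneously verifying that the trace (constant) terms are consistent rather than over-determining the system. The unboundedness of $\bR^n$ is a minor point, handled by the Gaussian moment bounds that also verify Assumption~\ref{assume: elliptic}.
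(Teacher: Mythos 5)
Your proposal is correct and follows essentially the same route as the paper: the same linear-plus-quadratic ansatz for $\Phi_\xi$ (so that $\nabla\Phi_\xi=\dot\mu_1+S_1(x-\mu)$), the same coefficient matching yielding the Lyapunov equation $S_1\Sigma+\Sigma S_1=\dot\Sigma_1$, and the same Gaussian moment computation for the final integral. Your explicit check that the trace (constant) terms are automatically consistent, and the verification of condition \eqref{eqn: regularity of phi, whole space}, are details the paper leaves implicit but are handled correctly.
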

    
    \begin{proof}
        First we examine the elliptic equation in definition \ref{def:metric tensor} has the solution explained in Proposition \ref{prop: multivariate gaussian}. Write down the equation
        \[\langle \nabla_{\theta} \rho(x,\theta), \xi \rangle = -\nabla \cdot (\rho(x,\theta) \nabla \Phi_{\xi}(x)).\]
        By some computations we have
        \begin{equation*}
        \begin{aligned}
        &\langle \nabla_{\theta} \rho(x,\theta), \xi \rangle=\langle \nabla_{\mu} \rho(x,\theta), \dot{\mu}_1\rangle + \text{tr}(\nabla_{\Sigma} \rho(x,\theta)\dot{\Sigma}_1),\\
        &\langle \nabla_{\mu} \rho(x,\theta), \dot{\mu}_1\rangle= \dot{\mu}_1^T\Sigma^{-1}(x-\mu)\cdot \rho(x,\theta), \\
        &\text{tr}(\nabla_{\Sigma} \rho(x,\theta)\dot{\Sigma}_1)=-\frac{1}{2}\left(\text{tr}(\Sigma^{-1}\dot{\Sigma}_1)-(x-\mu)^T\Sigma^{-1}\dot{\Sigma}_1\Sigma^{-1}(x-\mu)\right) \cdot \rho(x,\theta), \\
        & -\nabla \cdot (\rho \nabla \Phi_{\xi}(x))=(\nabla \Phi_{\xi}(x))\Sigma^{-1}(x-\mu) \cdot \rho(x,\theta)-\rho(x,\theta)\Delta \Phi_{\xi}(x).
        \end{aligned}
        \end{equation*}
        Observing these equations, we let $\nabla \Phi_{\xi} (x)=(S_1(x-\mu)+\dot{\mu}_1)^T$, and $\Delta \Phi_{\xi}=\text{tr}(S_1)$, where $S_1$ is a symmetric matrix to be determined. By comparison of the coefficients and the fact $\dot{\Sigma}_1$ is symmetric, we obtain
        \[\dot{\Sigma}_1=S_1\Sigma+\Sigma S_1.\]
        Similarly $\nabla \Phi_{\eta} (x)=(S_2(x-\mu)+\dot{\mu}_2)^T$ and 
        \[\dot{\Sigma}_2=S_2\Sigma+\Sigma S_2.\]
        Then 
        \begin{align*}
        g_{\theta}(\xi, \eta)&=\int \rho(x, \theta) \nabla \Phi_\xi(x) \cdot \nabla \Phi_\eta(x) dx\\
        &=\langle\dot{\mu_1},\dot{\mu_2}\rangle+\text{tr}(S_1\Sigma S_2).
        \end{align*}
        It is easy to check $\Phi$ satisfies the condition \eqref{eqn: regularity of phi, whole space} and the uniqueness is guaranteed.
    \end{proof}
    For Gaussian distributions, the above derived metric tensor has already been revealed in \cite{GW}\cite{WM}\cite{BuresWasserstein}. Our calculation shows that it is a particular formulation of $G_W$. {Hence, this demonstrates our defined submanifold geometry is totally geodesic in the Gaussian case}. In the following, we turn to several one-dimensional non-Gaussian distributions and illustrate the metric tensor and the related geodesics, gradient flow numerically. {We will see in general, the Wasserstein statistical manifold defined here will not be totally geodesic.}
    \subsection{Mixture model}We consider a generalized version of the Gaussian distribution, namely the Gaussian mixture model. For simplicity we assume there are two components, i.e. $a\cN(\mu_1,\sigma_1)+(1-a)\cN(\mu_2,\sigma_2)$ with density functions:
    \[\rho(x,\theta)=\frac{a}{\sigma_1 \sqrt{2\pi}}e^{-\frac{(x-\mu_1)^2}{2\sigma_1^2}}+\frac{1-a}{\sigma_2 \sqrt{2\pi}}e^{-\frac{(x-\mu_2)^2}{2\sigma_2^2}},\]
    where $\theta=(a,\mu_1,\sigma_1^2,\mu_2,\sigma_2^2)$ and $a\in [0,1]$.
    
    We first compute the geodesics of Wasserstein statistical manifold numerically. Set $\theta^0=(0.3,-3,0.5^2,-5,0.4^2)$ and $\theta^1=(0.6,7,0.4^2,5,0.3^2)$. Their density functions are shown in Figure \ref{fig:mixture density}.
    \begin{figure}[ht]
        \centering
        \includegraphics[width=10cm]{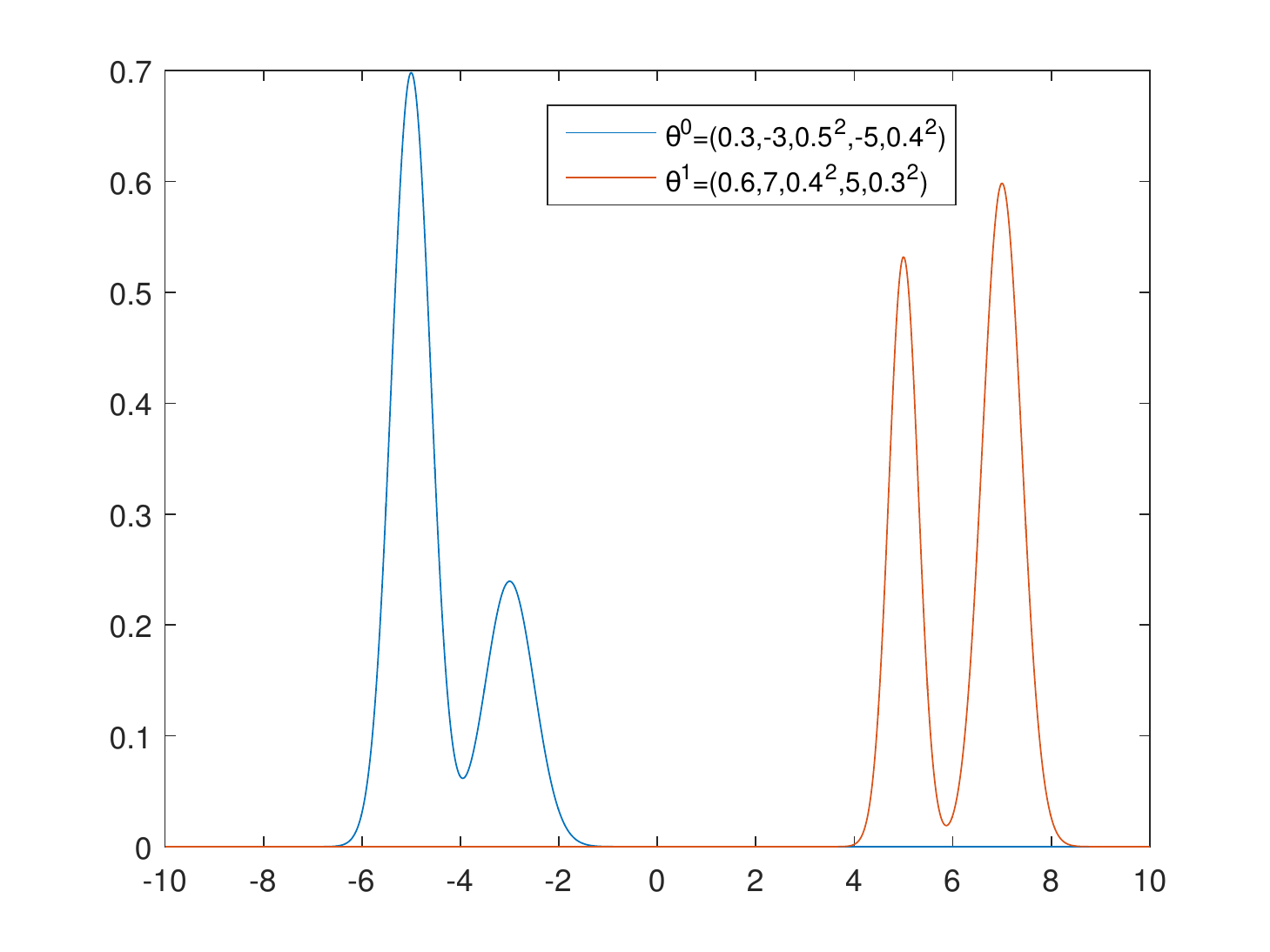}
        \caption{Densities of Gaussian mixture distribution}
        \label{fig:mixture density}
    \end{figure}
    
    To compute the geodesics in the Wasserstein statistical manifold, we solve the optimal control problem in \eqref{eqn:d_G} numerically via a direct method. Discretize the problem as
    \begin{align*}
    \min_{\theta_i, 1\leq i \leq N-1} 
    %{\color{blue}N^2}
    N\sum_{i=0}^{N-1} (\theta_{i+1}-\theta_i)^TG_W(\theta_i)(\theta_{i+1}-\theta_i),
    \end{align*}     
    where $\theta_0=\theta^0,\theta_N=\theta^1$ and the discrete time step-size is $1/N$. We use coordinate descent method, i.e. applying gradient on each $\theta_i, 1 \leq i \leq N-1$ alternatively till convergence.
    
    The geodesics in the whole density space is obtained by first computing the optimal transportation map $T$, using the explicit formula in one dimension $Tx=F_1^{-1}(F_0(x))$. Here $F_0,F_1$ are the cumulative distribution functions of $\rho^0$ and $\rho^1$. Then the geodesic probability densities satisfies $\rho(t,x)=(tT+(1-t)I)\#\rho_0(x)$ for $0\leq t\leq 1$, where $\#$ is the push forward operator. The result is shown in Figure \ref{fig: geodesic our metric, mixture}.
    
    \begin{figure}[ht]
        \centering
        \includegraphics[width=7cm]{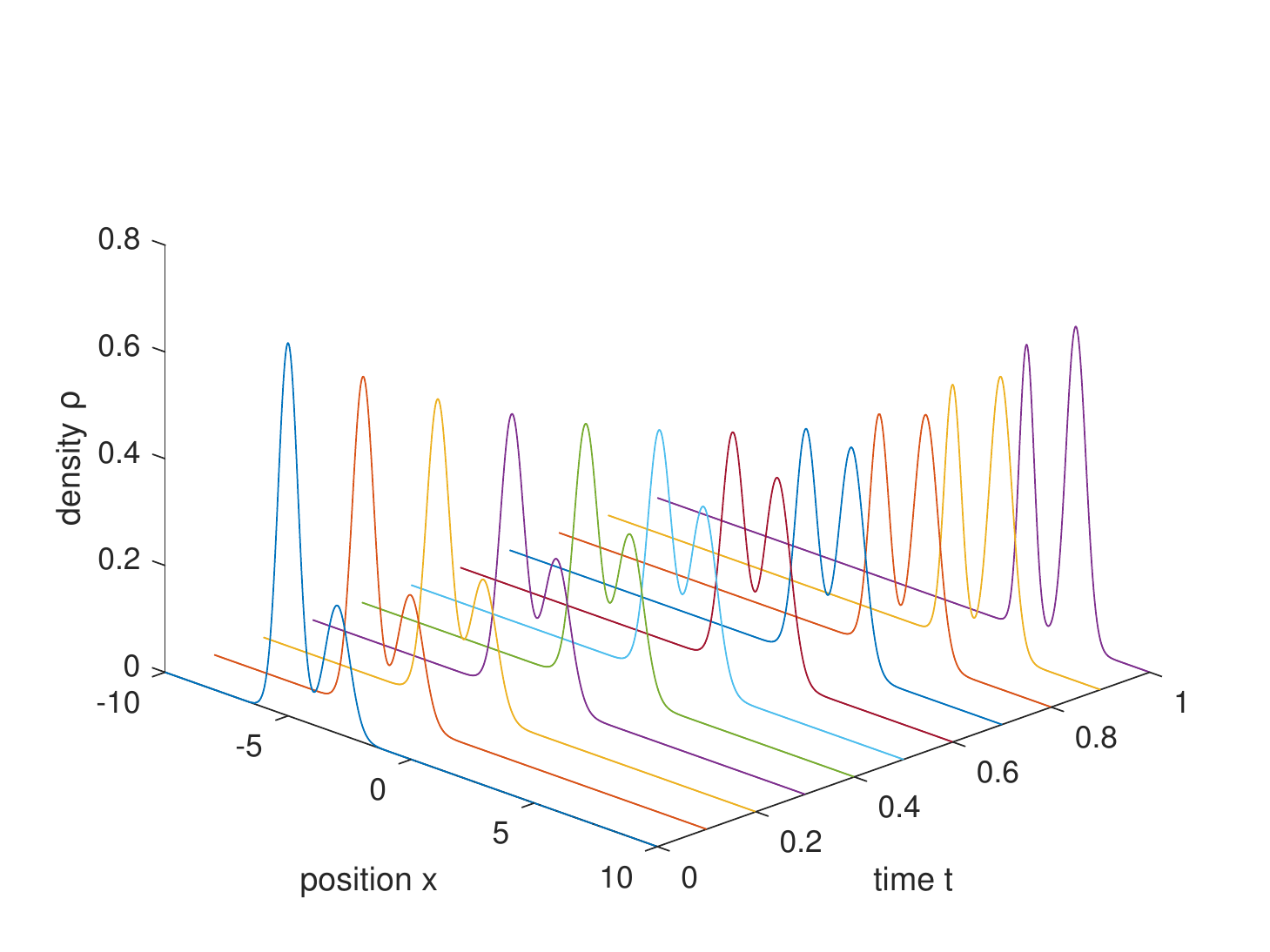}
        \includegraphics[width=7cm]{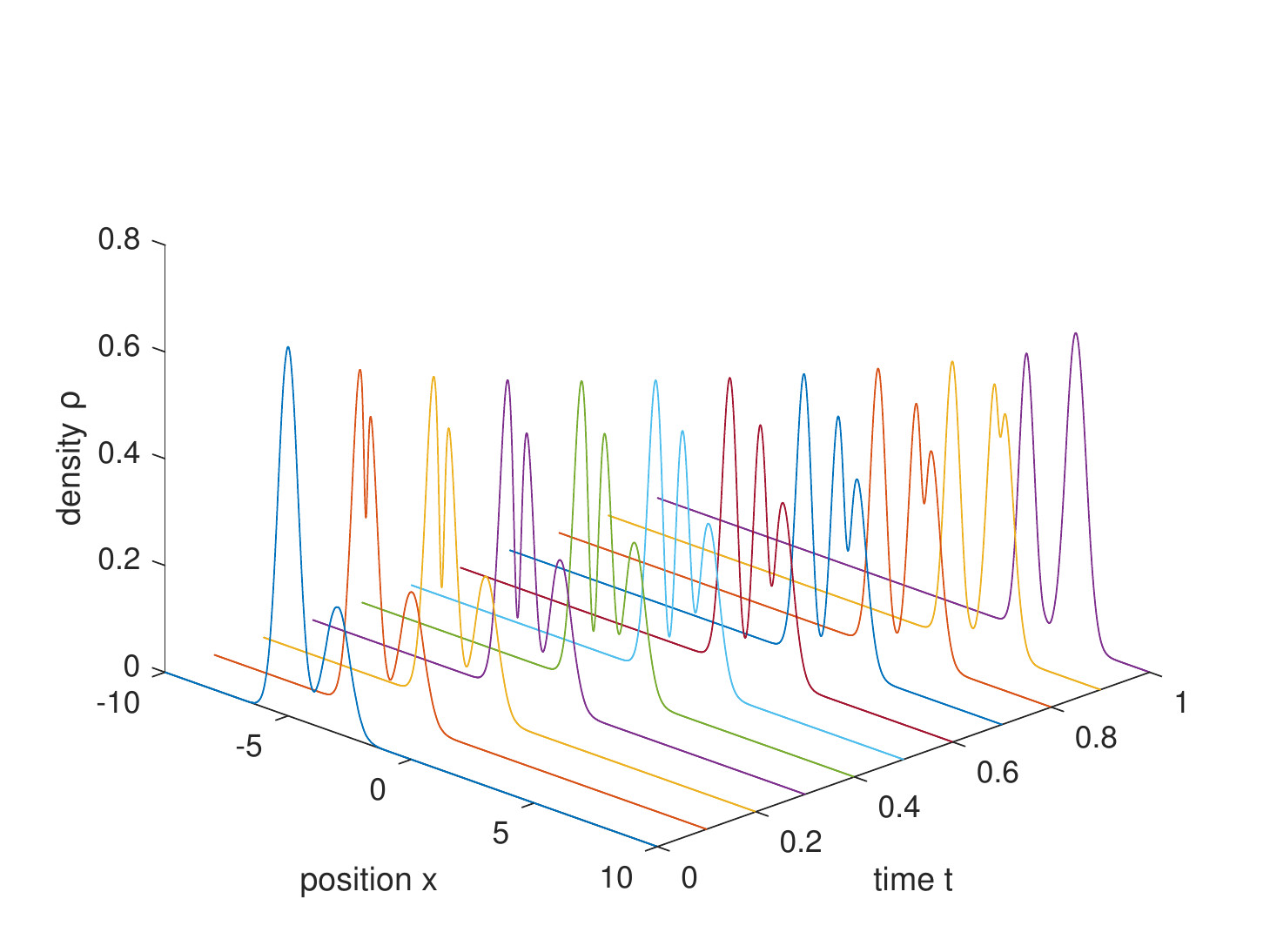}
        \caption{Geodesic of Gaussian mixtures; left: in the Wasserstein statistical manifold; right: in the whole density space}
        \label{fig: geodesic our metric, mixture}
    \end{figure}
    
    Figure \ref{fig: geodesic our metric, mixture} demonstrates that the geodesics in the whole density manifold does not lie in the sub-manifold formed by the mixture distribution, and thus the distance $d_W$ differs from the $L^2$ Wasserstein metric. Hence the optimal transport in the whole density space destroys the geometric shape during its path, which is not a desired property when we perform transportation. 
    
    Next, we test the Wasserstein natural gradient method in optimization. Consider the Gaussian mixture fitting problem: given $N$ data points $\{x_i\}_{i=1}^N$ obeying the distribution $\rho(x;\theta^1)$ (unknown), we want to infer $\theta^1$ by using these data points, which leads to a minimization as:
    \[\min_{\theta} d\left(\rho(\cdot;\theta),\frac{1}{N}\sum_{i=1}^N \delta_{x_i}(\cdot)\right), \] where $d$ are certain distance functions on probability space. If we set $d$ to be KL divergence, then the problem will correspond to the maximum likelihood estimate. Since $\rho(x;\theta^1)$ has small compact support, using KL divergence is risky and needs very good initial guess and careful optimization. Here we use the $L^2$ Wasserstein metric instead and set $N=10^3$. We truncate the distribution in $[-r,r]$ for numerical computation. We choose $r=15$. The Wasserstein metric is effectively computed by the explicit formula
    \begin{equation*}
    \frac{1}{2}\left( W_2(\rho(\cdot;\theta),\frac{1}{n}\sum_{i=1}^N \delta_{x_i})\right)^2=\frac{1}{2}\int_{-r}^r |x-T(x)|^2 \rho(x;\theta) dx,
    \end{equation*}
    where $T(x)=F_{em}^{-1}(F(x))$ and $F_{em}$ is the cumulative distribution function of the empirical distribution. The gradient with respect to $\theta$ can be computed through
    \begin{equation*}
    \nabla_{\theta} (\frac{1}{2}W^2) = \int_{-r}^r \phi(x) \nabla_\theta \rho(x;\theta) dx, 
    \end{equation*}
    where $\phi(x)=\int_{-r}^x (y-T(y)) dy$ is the Kantorovich potential. The derivative $\nabla_{\theta} \rho(x;\theta)$ is obtained by numerical differentiation. We perform the following five iterative algorithms to solve the optimization problem: 
    \begin{align*}
    \label{iterative schemes}
    &\text{Gradient descent (GD)}: \quad\theta_{n+1}=\theta_n-\tau \nabla_{\theta} (\frac{1}{2}W^2)|_{\theta_n}\\
    &\text{GD with diag-preconditioning}: \quad\theta_{n+1}=\theta_n-\tau P^{-1}\nabla_{\theta} (\frac{1}{2}W^2)|_{\theta_n}\\
    &\text{Wasserstein GD}: \quad\theta_{n+1}=\theta_n-\tau G_W(\theta_n)^{-1} \nabla_{\theta} (\frac{1}{2}W^2)|_{\theta_n} \tag{A}\\
    &\text{Modified Wasserstein GD}: \quad\theta_{n+1}=\theta_n-\tau \left(\bar{G}_W(\theta_n)\right)^{-1} \nabla_{\theta} (\frac{1}{2}W^2)|_{\theta_n}\\
    &\text{Fisher-Rao GD}: \quad\theta_{n+1}=\theta_n-\tau G_F(\theta_n)^{-1} \nabla_{\theta} (\frac{1}{2}W^2)|_{\theta_n}\\
    \end{align*}
    We consider the diagonal preconditioning because the scale of parameter $a$ is very different from $\mu_i,\sigma_i$, $1\leq i \leq 2$. The diagonal matrix $P$ is set to be $\text{diag}(40,1,1,1,1)$.  We choose the initial step-size $\tau=1$ with line search such that the objective value is always decreasing. The initial guess $\theta=\theta^0$. Figure \ref{fig: mixture fitting wasserstein} shows the experimental results.
    
    \begin{figure}[ht]
        \centering
        \includegraphics[width=10cm]{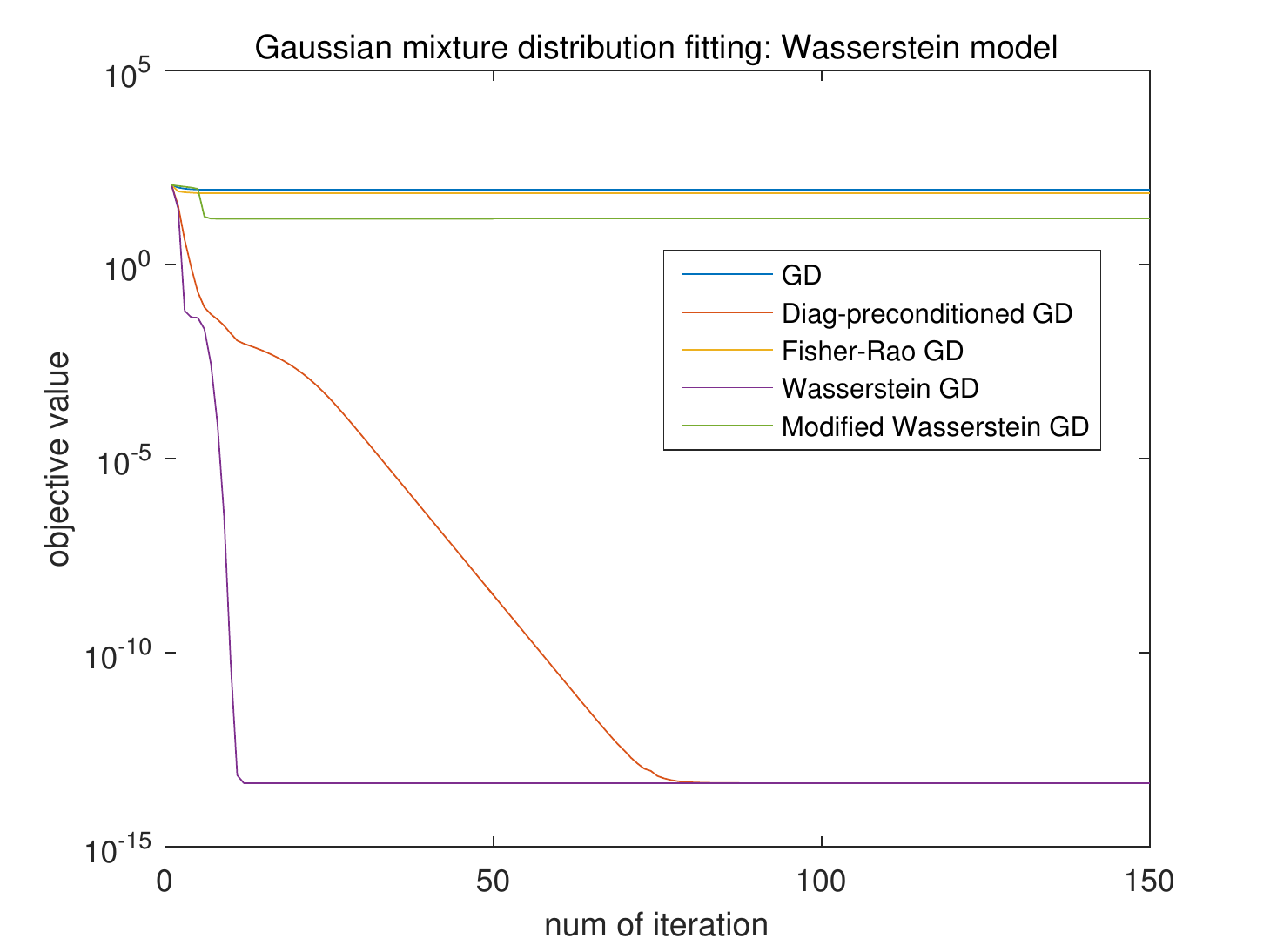}
        \caption{objective value}
        \label{fig: mixture fitting wasserstein}
    \end{figure}
    
    From the figure, it is seen that the Euclidean gradient descent fails to converge. We observed that during iterations, the parameter $a$ goes very fast to $1$ and then stop updating anymore. This is due to the ill-conditioned nature of the problem, in the sense that the scale of parameter differs drastically. If we use the diagonal matrix $P$ to perform preconditioning, then it converges after approximately 70 steps. If we use Wasserstein gradient descent, then the iterations converge very efficiently, taking less than ten steps. This demonstrates that $G_W(\theta)$ is well suited for the Wasserstein metric minimization problems, exhibiting very stable behavior. It can automatically detect the parameter scale and the underlying geometry. As a comparison, Fisher-Rao gradient descent fails, which implies $G_F(\theta)$ is not suitable for this Wasserstein metric modeled minimization. 
    
    The modified Wasserstein gradient descent does not converge because of the numerical instability of $T'$ and further $\bar{G}_W(\theta)$ in the computation. In the next example with lower dimensional parameter space, however, we will see that $\bar{G}_W(\theta)$ performs better than $G_W(\theta)$. This implies $\bar{G}_W(\theta)$, if computed accurately, might achieve a smaller approximation error to the Hessian matrix. Nevertheless, the difference is very slight, and since the matrix $\bar{G}_W(\theta)$ can only be applied to the Wasserstein modeled problem, we tend to believe that $G_W(\theta)$ is a better preconditioner. 
    
    \subsection{Gamma distribution}
    
    Consider gamma distribution $\Gamma(\alpha,\beta)$, which has the probability density function
    \[\rho(x;\alpha,\beta)=\frac{\beta^{\alpha}x^{\alpha-1}e^{-\beta x}}{\Gamma(\alpha)}.\]
    Set $\theta=(\alpha,\beta)$ and $\theta^0=(2,3), \theta^1=(20,2)$. 
    Their density functions are shown in Figure \ref{fig: gamma density}.
    \begin{figure}[ht]
        \centering
        \includegraphics[width=10cm]{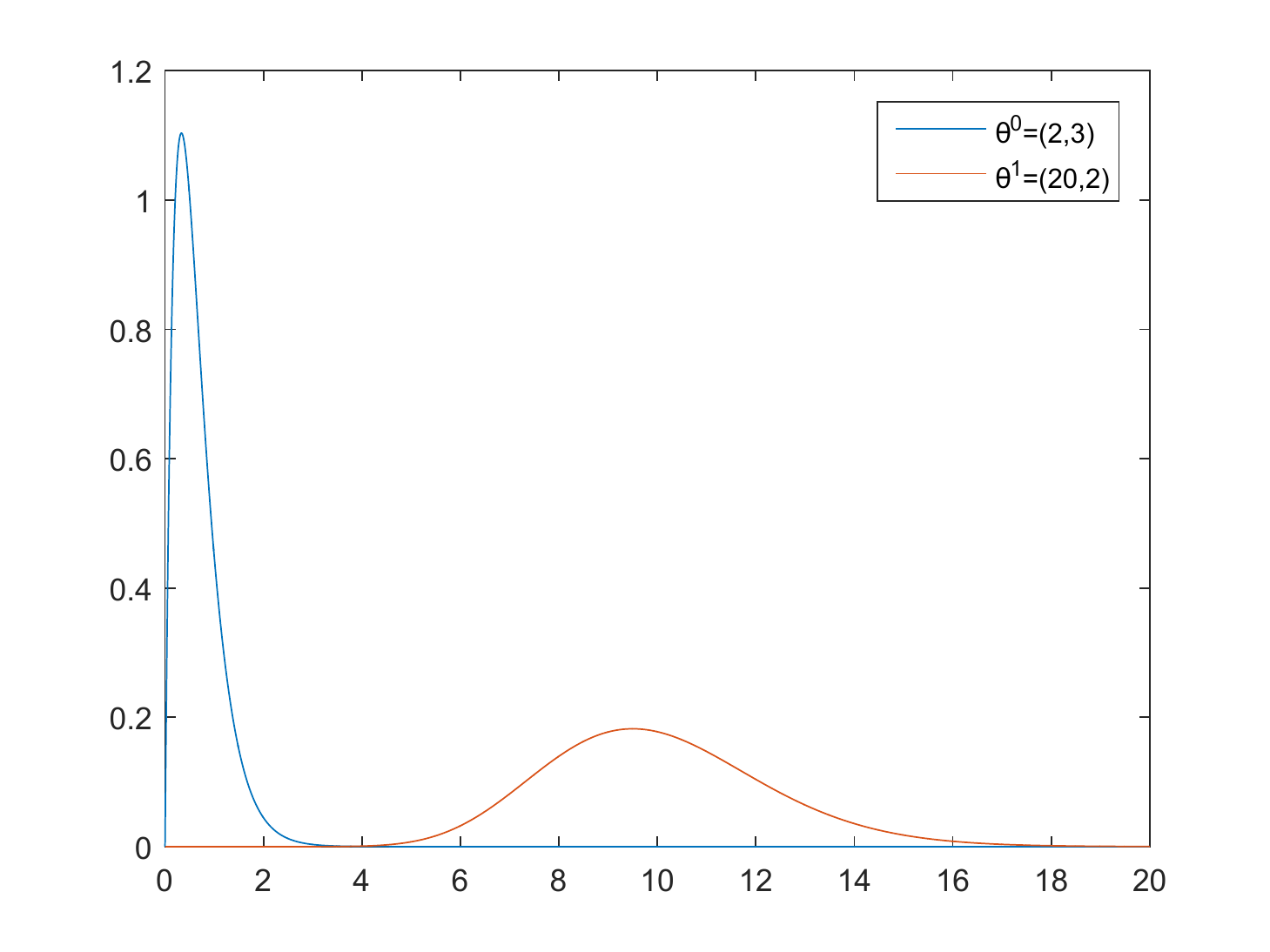}
        \caption{Gamma density functions}
        \label{fig: gamma density}
    \end{figure}
    
    We compute the related geodesics in the Wasserstein statistical manifold and the whole density space, respectively. The results are presented in Figure \ref{fig: geodesic our metric}. We can see that these two do not differ very much. This means the optimal transport in the whole space could nearly keep the gamma distribution shape along with transportation.
    \begin{figure}[ht]
        \centering
        \includegraphics[width=7cm]{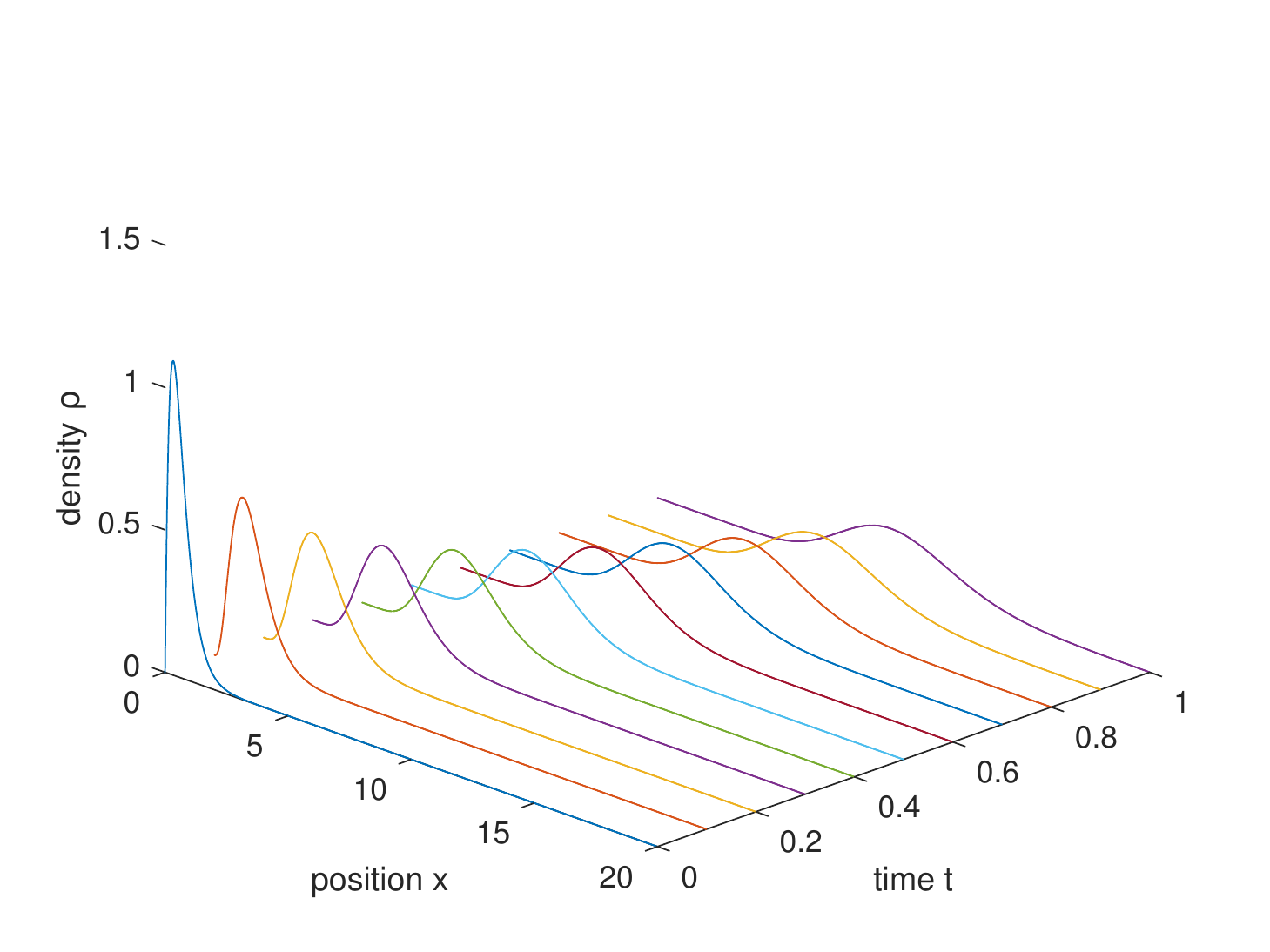}
        \includegraphics[width=7cm]{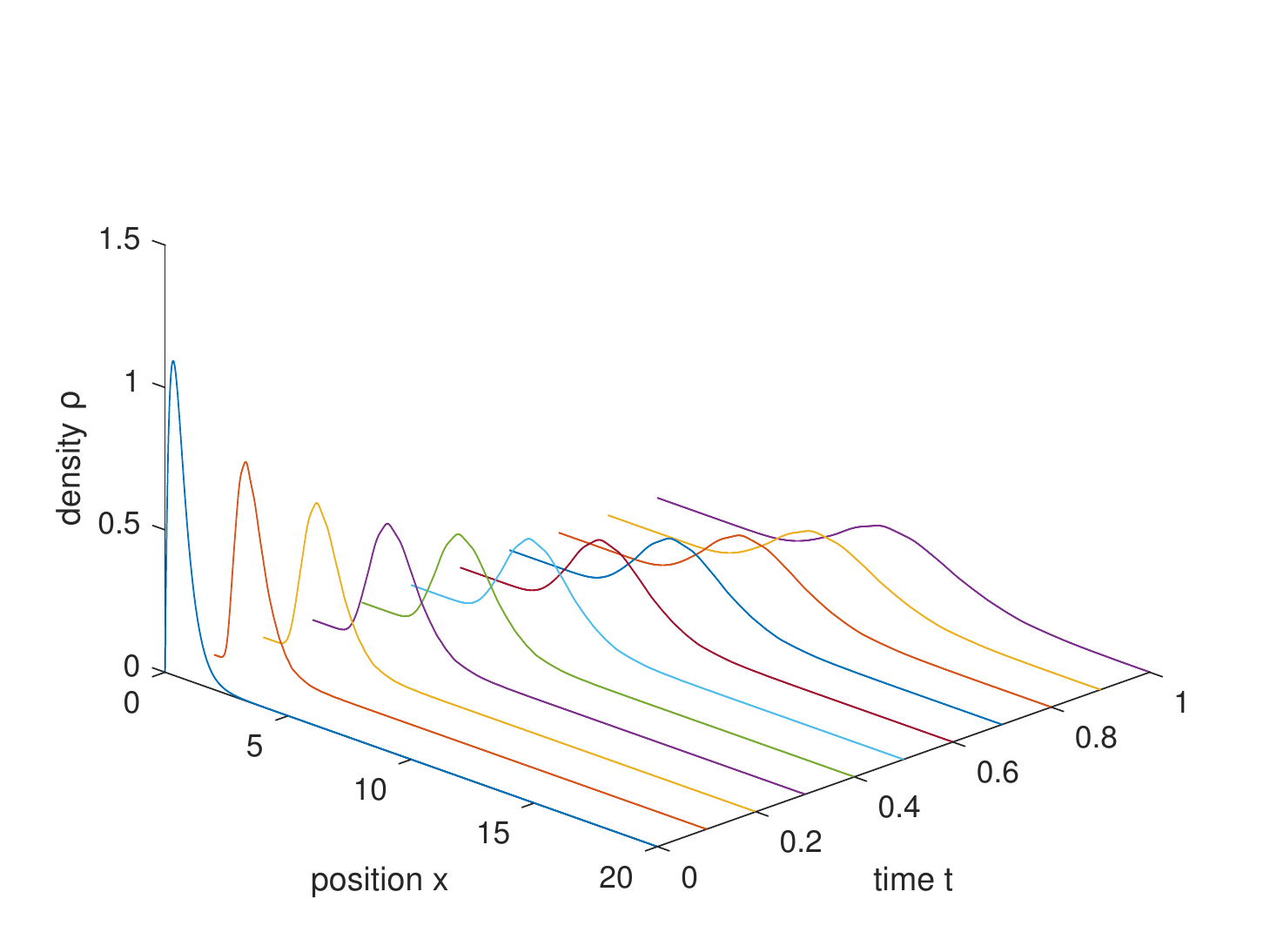}
        \caption{Geodesic of Gamma distribution; left: in the Wasserstein statistical manifold; right: in the whole density space}
        \label{fig: geodesic our metric}
    \end{figure}
    
    Then, we consider the gamma distribution fitting problem. The model is similar to the one in the mixture examples, except that the parameterized family changes. The minimization problem is:
    \[\min_{\theta} \frac{1}{2}\left( W_2(\rho(\cdot;\theta),\frac{1}{N}\sum_{i=1}^N \delta_{x_i})\right)^2,\]
    where $x_i \sim \rho(\cdot,\theta^1)$ and we set $N=10^3$. The initial guess is $\theta=\theta^0$. Convergence results are presented in Figure \ref{fig: gamma fitting}. 
    \begin{figure}[ht]
        \centering
        \includegraphics[width=10cm]{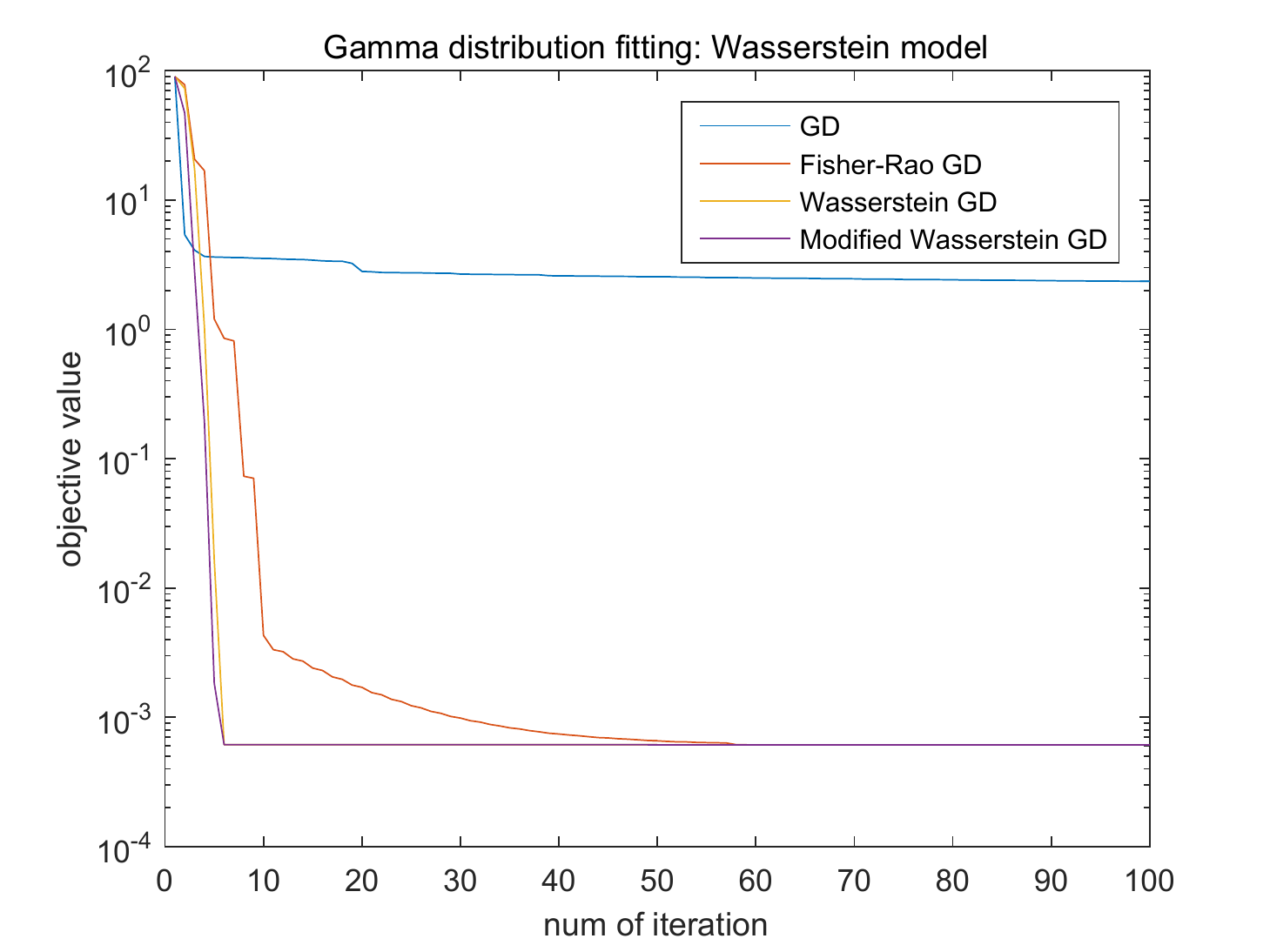}
        \caption{objective value}
        \label{fig: gamma fitting}
    \end{figure}
    
    The figure shows that the Euclidean gradient descent method takes a very long time to reach convergence, while Wasserstein GD and its modified version needs less than ten steps, with the Fisher-Rao GD taking around 50 steps. This comparison demonstrates the efficiency of the Wasserstein natural gradient in this Wasserstein metric modeled optimization problems. As is mentioned in the previous example, the difference between using $G_W$ and $\bar{G}_W$ is tiny. Since $\bar{G}_W$ fails in the mixture example, we conclude that $G_W$, the Wasserstein gradient descent, will be a more stable choice for preconditioning.
    
    \section{Numerical Comparison between Fisher-Rao and Wasserstein natural gradient}
    \label{sec: comparison FR and W}
    In the previous section, we provide several statistical estimation examples, in which the Wasserstein metric is served as the loss function, to investigate the performance of different kinds of gradient descent algorithms. Among the five iterative schemes in (\ref{iterative schemes}), we observe that the Wasserstein natural gradient outperforms the others. This demonstrates our theoretical arguments before. However, these discussions are all constrained to the Wasserstein metric loss function case and limited to the occasions that the ground truth density lies in the parametric family, i.e., the parametrization is well-specified, which may not be true in practice. Thus, it would be interesting to see the performance of these algorithms when different loss functions are used (say, KL divergence or Maximum Likelihood Estimation) and when the actual density lies outside the parametric family. This is the main target of the present section.
    
    Here, we consider three different iterative rules, namely Gradient Descent (GD), Wasserstein GD, and Fisher-Rao GD. In general, the loss is denoted by $d(\rho(\cdot,\theta),\rho^*(\cdot))$. Then, the iterative schemes write
    \begin{align*}
    \label{iterative B}
    &\text{Gradient descent (GD)}: \quad\theta_{n+1}=\theta_n-\tau \nabla_{\theta} d(\rho(\cdot,\theta),\rho^*(\cdot))|_{\theta_n}\\
    &\text{Wasserstein GD}: \quad\theta_{n+1}=\theta_n-\tau G_W(\theta_n)^{-1} \nabla_{\theta} d(\rho(\cdot,\theta),\rho^*(\cdot))|_{\theta_n} \tag{B}\\
    &\text{Fisher-Rao GD}: \quad\theta_{n+1}=\theta_n-\tau G_F(\theta_n)^{-1} \nabla_{\theta} d(\rho(\cdot,\theta),\rho^*(\cdot))|_{\theta_n}\\
    \end{align*}
    We omit the GD with diag-preconditioning in (\ref{iterative schemes}) because it would be very difficult to come up with a general diagonal preconditioning rule for arbitrary problems. The modified Wasserstein GD is also omitted because in previous examples it behaves less stable compared to the un-modified version and more crucially, it is only applicable to the Wasserstein metric loss based problems.
    
    The experiment is designed as follows: we set $d$ to be the Wasserstein metric and KL divergence (or equivalently, Maximum Likelihood Estimate), respectively. The parametric density $\rho(\cdot,\theta)$ is the two-component Gaussian mixture
    \begin{equation*}
    \label{eqn: mixture parameter}
    \rho(x,\theta)=\frac{1}{1+\exp(a)}\frac{1}{\sigma_1 \sqrt{2\pi}}e^{-\frac{(x-\mu_1)^2}{2\sigma_1^2}}+\frac{1}{1+\exp(-a)}\frac{1}{\sigma_2 \sqrt{2\pi}}e^{-\frac{(x-\mu_2)^2}{2\sigma_2^2}},
    \end{equation*}
    where $\theta=(a,\mu_1,\sigma_1^2,\mu_2,\sigma_2^2)$. 
    The target $\rho^*(\cdot)$ is the empirical distribution of data samples generated from two-component Gaussian mixture distribution (well-specified case) or Laplace distribution (misspecified case, such that $\rho^*$ is not in the parametric family) with density
    \begin{equation*}
    \label{eqn:Laplace parameter}
    \rho^*(x,\mu,b)=\frac{1}{2b}\exp(-\frac{|x-\mu|}{b}).
    \end{equation*}
    
    The number of data samples is determined later. We randomly pick some values of the parameters to generate $\rho(x,\theta^0)$ and $\rho^*$, where $\theta^0$ is the initial parameter value. We draw a certain number of samples (determined later) to form the empirical data and construct the loss function, either the Wasserstein metric or the Maximum Likelihood Estimate. Then, we apply the three iterative schemes in (\ref{iterative B}) according to the following rule: the step size $\tau$ is chosen to monotonically decrease the objective value along with the iteration (by line search: $\tau \to \tau/2$ with initial stepsize $1$). The iteration is stopped if
    \begin{itemize}
        \item $\|\nabla_{\theta} d(\rho(\cdot,\theta),\rho^*(\cdot))\| \leq \epsilon$ where we set $\epsilon=10^{-1}$; or
        \item step size is smaller than $\delta$, but line search still fails, where we set $\delta=10^{-4}$; or
        \item iteration step is larger than $N$; where we set $N=200$
    \end{itemize}
    In the stopping rule, the first case corresponds to the iteration converges. The second case may happen when the metric tensor is poorly conditioned, or the iteration jumps out of the computation region. The third case is for computational efficiency consideration to prevent too many iterations. We collect the statistical behaviors of these iterations for many differently generated $\rho(x,\theta)$ and $\rho^*$. We classify the results based on their final objective values and calculate the mean value and standard variance. We also record the number of iterations taken in the process when different methods are used.
    
    We present the experimental results in the following two subsection, from well-specified case to misspecified case.
    \subsection{Well-specified case}
    In the well-specified case, we set $\rho(\cdot,\theta_0)$ (initial point of the iteration) and $\rho^*$ both from two-component mixture family, parametrized in the following way:
    \[\rho(x,\theta)=\frac{1}{1+\exp(a)}\frac{1}{\sigma_1 \sqrt{2\pi}}e^{-\frac{(x-\mu_1)^2}{2\sigma_1^2}}+\frac{1}{1+\exp(-a)}\frac{1}{\sigma_2 \sqrt{2\pi}}e^{-\frac{(x-\mu_2)^2}{2\sigma_2^2}},\]
    where $\theta=(a,\mu_1,\sigma_1^2,\mu_2,\sigma_2^2)$.
    
    We choose $a$ uniformly distributed in $[-2,2]$, $\mu_1,\mu_2$ uniformly in $[-10,10]$, $\sigma_1^2,\sigma_2^2$ uniformly in $[1,11]$ to generate $\rho(\cdot,\theta^0)$ and $\rho^*$. We collect $200$ samples from $\rho^*$, whose empirical distribution will be the target data we use in optimization. The loss function is chosen as the Wasserstein metric and Maximum Likelihood Estimate, respectively. Given the loss function, we perform the three iterative methods. 100 trials of different $\rho(\cdot,\theta^0)$ and $\rho^*$ are tested. We output the mean / standard derivation (std) of the final objective values and number of total iteration steps in the 100 trials. We also plot the histogram of the final objective (obj) value for the three methods for better comparisons. Table \ref{table:W2 loss, well} and Figure \ref{hist: MLE, well} are for the Wasserstein metric loss, while Table \ref{table: MLE, well} and Figure \ref{hist: MLE, well} are for the Maximum Likelihood Estimation case.
    \begin{table}[ht]
        \centering
        \begin{tabular}{llll}
            \hline
            & Wasserstein GD & Fisher-Rao GD & GD       \\ \hline
            obj mean        & 0.0378          & 0.0489        & 0.1543   \\ \hline
            obj std         & 0.0208        & 0.0304         & 0.4251   \\ \hline
            iterations mean & 5.0842         & 9.6947      &  47.6105 \\ \hline
            iterations std  & 1.7052          & 5.5373       & 57.4925 \\ \hline
        \end{tabular}
        \caption{Wasserstein metric loss function: well-specified case}
        \label{table:W2 loss, well}
    \end{table}
    
    \begin{figure}[ht]
        \centering
        \includegraphics[width=8cm]{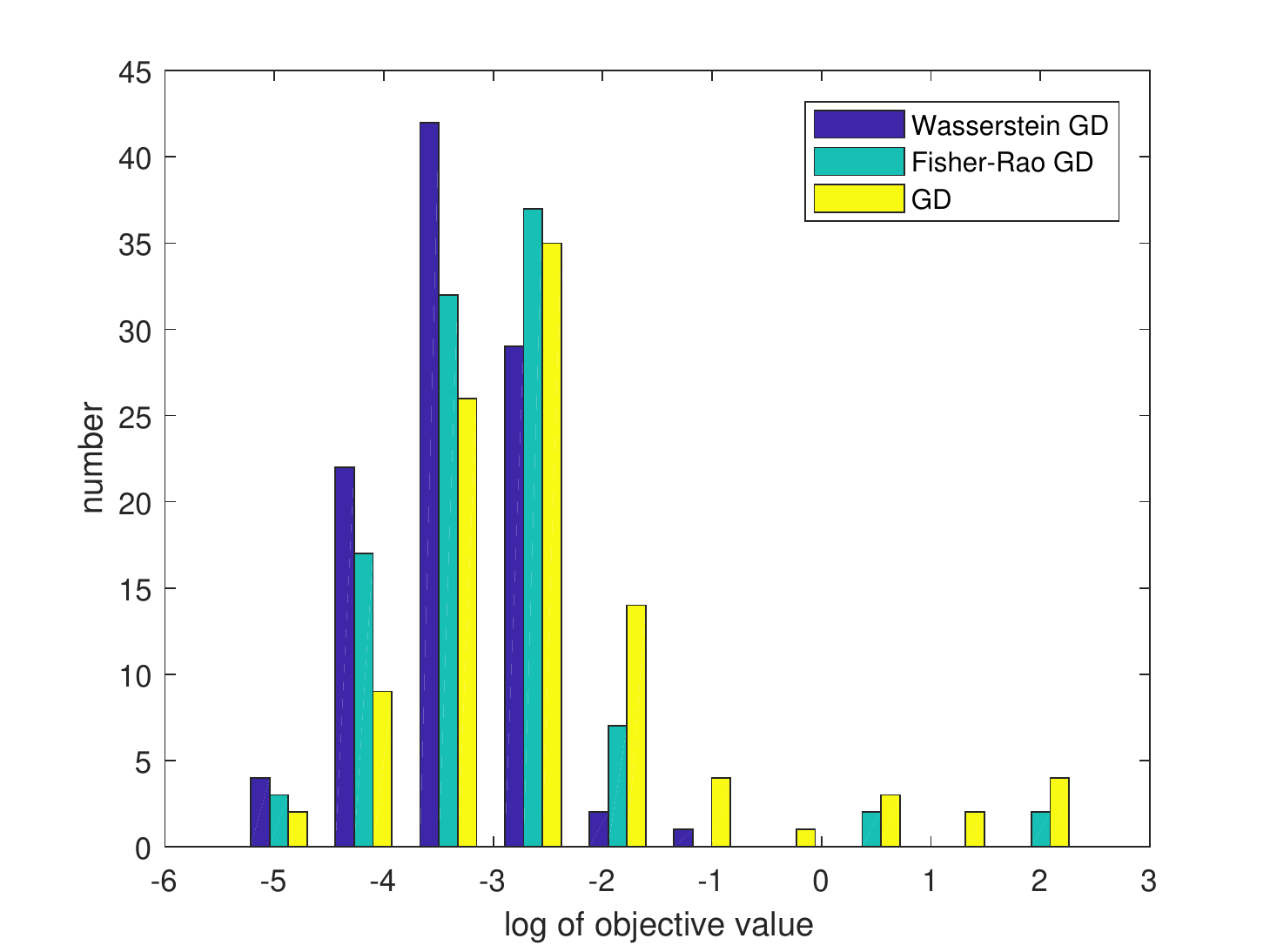}
        \caption{Histogram of obj (Wasserstein metric loss function: well-specified case)}
        \label{hist:W2 loss, well}
    \end{figure}
    
    \begin{table}[ht]
        \centering
        \begin{tabular}{llll}
            \hline
            & Wasserstein GD & Fisher-Rao GD & GD       \\ \hline
            obj mean        & 2.5613          & 2.4265        & 2.4119  \\ \hline
            obj std         & 0.4290        & 0.3548         & 0.2686   \\ \hline
            iterations mean & 102.1684        & 3.9053     &  15.4632 \\ \hline
            iterations std  & 84.5325       & 0.9461       & 6.2481 \\ \hline
        \end{tabular}
        \caption{Maximum Likelihood Estimate: well-specified case}
        \label{table: MLE, well}
    \end{table}
    
    \begin{figure}[ht]
        \centering
        \includegraphics[width=8cm]{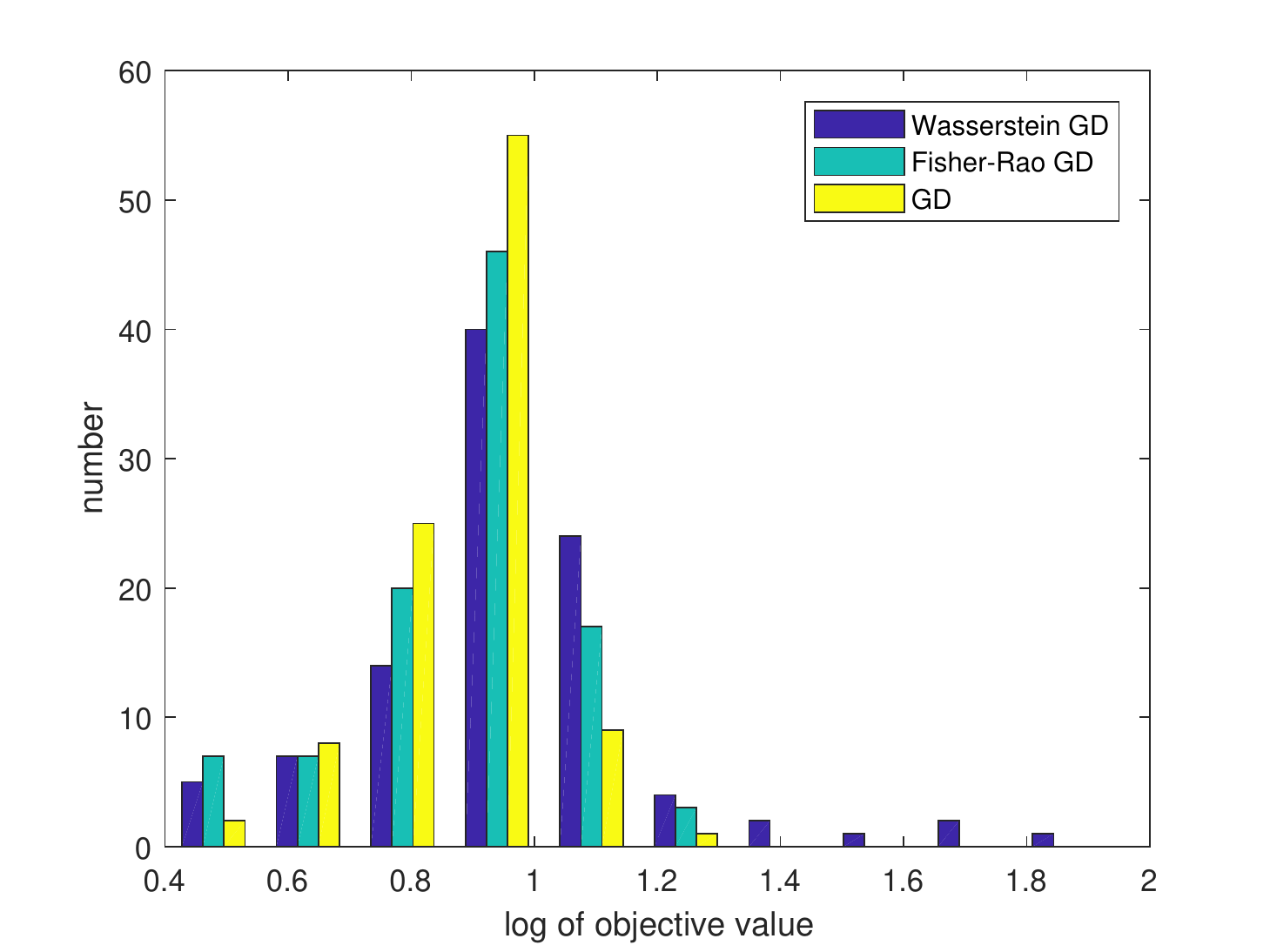}
        \caption{Histogram of obj (Maximum Likelihood Estimate: well-specified case)}
        \label{hist: MLE, well}
    \end{figure}
    From the Tables \ref{table:W2 loss, well}, \ref{table: MLE, well} and Figures \ref{hist:W2 loss, well}, \ref{hist: MLE, well}, we observe that Wasserstein GD behaves well when the Wasserstein metric is used as loss function, while it may perform worse than GD if it is applied to Maximum Likelihood Estimate case. Similarly, Fisher-Rao GD is very efficient in the Maximum Likelihood Estimate case but is less effective than Wasserstein GD when the Wasserstein metric loss function is used. This may be explained by the argument that the natural gradient behaves as an asymptotic Newton's algorithm. Interestingly, in our experiments, the Fisher-Rao GD seems to be more stable and robust (better than GD) when used in the Wasserstein metric loss case, compared to the performance of Wasserstein GD applied to Maximum Likelihood Estimation. 
    \subsection{Misspecified case}
    In the misspecified case, the target distribution is Laplace distribution
    $$\rho^*(x,\mu,b)=\frac{1}{2b}\exp(-\frac{|x-\mu|}{b}),$$ while the parametric family is still Gaussian mixtures. We choose $\mu$ uniformly distributed from $[-10,10]$, and $b$ uniformly from $[1,4]$ to generate the ground truth distributions and draw $200$ samples from it, which will be the empirical data. We conduct 100 runs of experiments using different generated $\rho(x,\theta), \rho^*$. The setting of the experiment is the same as the well-specified case. Table \ref{table:W2 loss, mis} and Figure \ref{hist: MLE, mis} are the results for the Wasserstein metric loss, while Table \ref{table: MLE, mis} and Figure \ref{hist: MLE, mis} are for the Maximum Likelihood Estimate case.
    
    \begin{table}[ht]
        \centering
        \begin{tabular}{llll}
            \hline
            & Wasserstein GD & Fisher-Rao GD & GD       \\ \hline
            obj mean        & 0.2490         & 0.3238       & 0.4986   \\ \hline
            obj std         & 0.2119        & 0.2493        & 0.3860   \\ \hline
            iterations mean & 6.2947         & 10.0211      & 56.3579 \\ \hline
            iterations std  & 1.7127         & 4.4840       & 55.3561 \\ \hline
        \end{tabular}
        \caption{Wasserstein metric loss function: misspecified case}
        \label{table:W2 loss, mis}
    \end{table}
    
    \begin{figure}[ht]
        \centering
        \includegraphics[width=8cm]{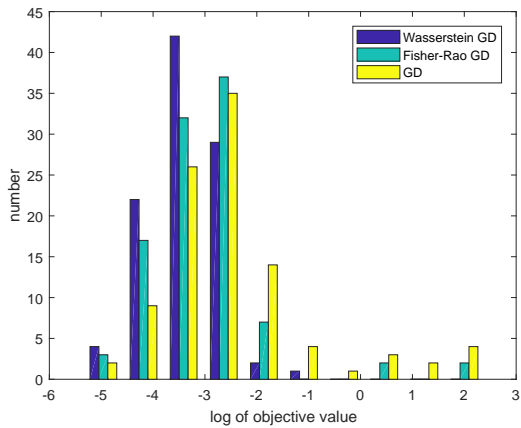}
        \caption{Histogram of obj (Wasserstein metric loss function: misspecified case)}
        \label{hist: W2 loss, mis}
    \end{figure}
    
    \begin{table}[ht]
        \centering
        \begin{tabular}{llll}
            \hline
            & Wasserstein GD & Fisher-Rao GD & GD       \\ \hline
            obj mean        & 2.8450         & 2.6446      & 2.6426   \\ \hline
            obj std         & 0.5207       & 0.3366        & 0.3792   \\ \hline
            iterations mean & 85.5895         & 4.2421       & 16.6211 \\ \hline
            iterations std  & 80.7177         & 1.1552       & 5.6251 \\ \hline
        \end{tabular}
        \caption{Maximum Likelihood Estimate: misspecified case}
        \label{table: MLE, mis}
    \end{table}
    
    \begin{figure}[ht]
        \centering
        \includegraphics[width=8cm]{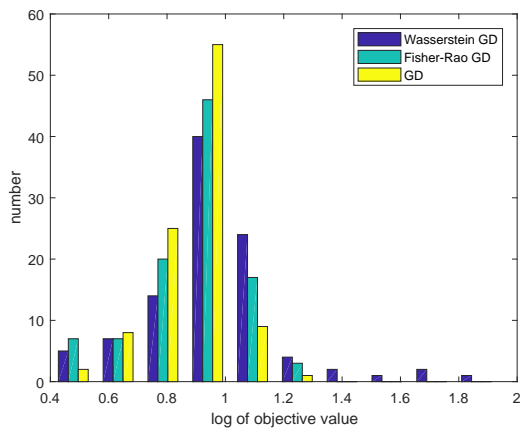}
        \caption{histogram of obj (Maximum Likelihood Estimate: misspecified case)}
        \label{hist: MLE, mis}
    \end{figure}
    
    From these tables and figures, we observe that the results are very similar to the well-specified case. Wasserstein GD is efficient for the Wasserstein metric loss function, while Fisher-Rao GD performs very well for Maximum Likelihood Estimation. These results still hold for the situation that the ground truth distribution $\rho^*$ is not in the parametric family. This implies the robustness of the natural gradient method. Though here we only conduct experiments on Gaussian mixtures and Laplace distribution, we believe our results reveal that when using the natural gradient concept for computations, one needs to choose a suitable geometry to design the natural gradient. This geometry should explore the structures of the objective function and the optimization problem in general. 
    \section{Discussion}
    \label{sec: discussion}
    To summarize, we introduce the Wasserstein statistical manifold for parametric models with continuous sample space. The metric tensor is derived by pulling back the $L^2$-Wasserstein metric tensor in density space to parameter spaces. Given this Riemannian structure, the Wasserstein natural gradient is then proposed. In a one-dimensional sample space, we obtain an explicit formula for this metric tensor, and from it, we show that the Wasserstein natural gradient descent method achieves asymptotically Newton's method for the Wasserstein metric modeled minimizations. Our numerical examples justify these arguments.
    
    One potential future direction is using Theorem \ref{thm: hessian of R} to design various efficient algorithms for solving Wasserstein metric modeled problems. The Wasserstein gradient descent only takes the asymptotic behavior into consideration, and we think a careful investigation of the structure \eqref{eqn:hessian of R} will lead to better non-asymptotic results. Moreover, generalizing \eqref{eqn:hessian of R} to higher dimensions also remains a challenging and interesting issue. We are working on designing an efficient computational method for obtaining $G_W(\theta)$ and hope to report it in subsequent papers.
    
    Analytically, the treatment of the Wasserstein statistical manifold could be generalized. This paper takes an initial step in introducing Wasserstein geometry to parametric models. More analysis on the solution of the elliptic equation and its regularity will be conducted. 
    
    Further, we believe ideas and studies from information geometry could lead to natural extensions in Wasserstein statistical manifold. The Wasserstein distance has shown its effectiveness in illustrating and measuring low dimensional supported densities in high dimensional space, which is often the target of many machine learning problems. We are interested in the geometric properties of the Wasserstein metric in these models, and we will continue to work on it.
    \\
    
    \textbf{Acknowledgments:} This research is partially supported by AFOSR MURI proposal number 18RT0073. The research of Yifan Chen is partly supported by the Tsinghua undergraduate Xuetang Mathematics Program and Caltech graduate Kortchak Scholarship. The authors thank Prof. Shui-Nee Chow for his farseeing viewpoints on the related topics, and we acknowledge many fruitful discussions with Prof. Wilfrid Gangbo and Prof. Wotao Yin. We gratefully thank Prof. Guido Mont\'ufar for many valuable comments regarding the experimental design part about an earlier version of this manuscript.

\end{document}